\definecolor{DDbule}{RGB}{45, 72, 207}
\definecolor{GreenLine}{RGB}{0, 128, 0}
\definecolor{RedLine}{RGB}{228, 26, 28}
\newcommand{\N}{\mathbb{N}}
\newcommand{\Normal}{\textbf{N}}  
\newcommand{\Z}{\mathbb{Z}}
\newcommand{\R}{\mathbb{R}}
\renewcommand{\I}{\mathbb{I}}
\newcommand{\SgE}{\mathcal{E}}
\renewcommand{\P}{\mathbb{P}}
\newcommand{\E}{\mathbb{E}}
\newcommand{\atom}{\boldsymbol{\alpha}}
\newcommand{\chain}{\textbf{X}}
\newcommand{\block}{\mathcal{B}}
\newcommand{\convergence}[1]{\xrightarrow{#1}}
\newcommand{\cadlag}{\mathscr{D}}
\newcommand{\cadlaginf}{\cadlag_{[0,+\infty)}}
\newcommand{\Var}{\operatorname{Var}}
\newcommand{\numreg}[1][n]{T\left({#1}\right)}
\newcommand{\irreducibilityMeasure}{\psi}
\renewcommand{\leq}{\leqslant}
\renewcommand{\geq}{\geqslant}
\newcommand{\indicator}[1]{\I\left\{#1\right\}}
\newcommand{\bootsblock}[0]{\block^{\ast}}
\newcommand{\fblock}[1][j]{f(\block_{#1})}
\newcommand{\fbootsblock}[1][j,\numreg]{f(\block^{\ast}_{#1})}
\newcommand{\floor}[1]{\lfloor #1 \rfloor}
\newcommand{\cadlagunit}[0]{\cadlag\left( [0,1] \right)}
\newcommand{\as}[0]{\convergence{\text{a.s.}}}
\newcommand{\bootsnumreg}[1][n]{T^{\ast}(n,\numreg[{#1}])}
\newcommand{\data}[1][n]{\textbf{X}^{({#1})}}
\renewcommand{\epsilon}{\varepsilon}
\renewcommand{\hat}{\widehat}
\theoremstyle{plain}
\newtheorem{theorem}{Theorem}[section]
\newtheorem{lemma}{Lemma}[section]
\newtheorem{corollary}{Corollary}[section]
\newenvironment{enumeration}{\begin{enumerate}[label=\roman*)]}{\end{enumerate}}
\theoremstyle{remark}
\newtheorem{remark}{Remark}[section]
\begin{document}

\begin{frontmatter}

    \title{Regenerative bootstrap for \(\beta\)-null recurrent Markov chains}
    \runtitle{Regenerative bootstrap for \(\beta\)-null recurrent Markov chains}

    \author[A,B]{\fnms{Carlos} \snm{Fernández}\corref{}\ead[label=e1]{fernandez@telecom-paris.fr}}
    \address[A]{LTCI, Telecom Paris, Institut Polytechnique de Paris\\
        \printead{e1}}
    \address[B]{MODAL'X, UMR CNRS 9023, Universit\'e Paris Nanterre}
    \runauthor{C. Fernández}

    \begin{abstract}
        Two regeneration-based bootstrap methods, namely, the
        \textit{Regeneration based-bootstrap} \cite{AthreyaFuh1992, Somnat-1993} and the
        \textit{Regenerative Block bootstrap} \cite{Bertail2006} are shown to be valid
        for the problem of estimating the integral of a function with respect to the invariant measure
        in a $\beta$-null recurrent Markov chain with an accessible atom.
        An extension of the Central Limit Theorem for randomly indexed sequences
        is also presented.
    \end{abstract}

    \begin{keyword}[class=MSC]
        \kwd[Primary ]{60K35}
        \kwd{60K35}
        \kwd[; secondary ]{60K35}
    \end{keyword}

    \begin{keyword}
        \kwd{bootstrap}
        \kwd{regeneration based bootstrap}
        \kwd{regeneration block bootstrap}
        \kwd{nonparametric estimation}
        \kwd{null recurrent Markov chain}
    \end{keyword}


    \tableofcontents

\end{frontmatter}


\begin{acks}[Acknowledgments]
    This research has been conducted as part of the project Labex MME-DII (ANR11-LBX-0023-01).
\end{acks}

In \cite{efron1979}, Bradley Efron introduced the Bootstrap as a way to overcome some
limitations of classical methods that often relied on strong assumptions about the data’s
underlying distribution or the model’s form. Since then, these techniques, first studied
in the i.i.d. case, have been developed and extended to time-series (see
\cite{lahiri2003resampling} for an extensive survey of methods) and applied to a wide
range of problems in various fields such as signal processing
\cite{Zoubir1998,zoubirIskander2004}, soil science \cite{XiaoBing2023} and econometrics
\cite{MacKinnon2006,Horowitz2019}. These methods are easy to implement with modern
computing power and can provide more accurate and reliable inferences than traditional
methods in many situations.

Although originally designed for i.i.d. sampling, there has been significant interest in
adapting the bootstrap to situations where the data is dependent. Several resampling
methods have been proposed for time series data: these include the autoregressive-sieve
bootstrap \cite{KreissPaparoditisPolitis2011}, block bootstrap \cite{Kunsch1989} ,
circular bootstrap \cite{politis1991circular}, the stationary bootstrap
\cite{PolitisRomano1994}, continuous-path block bootstrap \cite{PaparoditisPolitis2000},
tapered block bootstrap \cite{PaparoditisPolitis2001a}, frequency-domain bootstrap
\cite{Paparoditis2002,KreissPaparoditis2003}, and local bootstrap
\cite{PaparoditisPolitis2002a}. For detailed reviews and comparisons of these methods see
\cite{FrankeKreissMammen2002,KreissPaparoditis2011,KreissLahiri2012,Cavaliere2015} and
the references therein.

In the Markovian case, numerous approaches have been developed and examined. In
\cite{Kulperger1989}, the authors proposed a block resampling scheme that consists in
resampling from a nonparametric estimate of the one-step transition matrix of a finite
state Markov chain. This method was extended to the countable case in
\cite{AthreyaFuh1992}. Extensions of this method have been proposed for the case where
the state space is Euclidean, as seen in \cite{Rajarshi1990},
\cite{PaparoditisPolitis2001,PaparoditisPolitis2002} and \cite{Horowitz2003}. The general
concept is to estimate the marginal distribution and the transition probability function
using a nonparametric function estimation technique and then resample from those
estimates. For a detailed explanation of this approach, refer to Section 4 in
\cite{KreissLahiri2012}.

A completely new approach to this problem was introduced in \cite{AthreyaFuh1992}.
Instead of using estimated transition probabilities, they exploit the regeneration
properties of a Markov chain when an accessible atom is visited infinitely often. The
main idea underlying this method consists in dividing the chain into a random number of
i.i.d. regeneration blocks and then resampling the same number of regeneration blocks.
This method, named \textit{Regeneration based bootstrap}, was proved to be valid for
finite state atomic chains in \cite{AthreyaFuh1992}, and it was extended to general
atomic positive recurrent Markov chains in \cite{Somnat-1993}.

It was pointed out in \cite{BertailClemencon2007} that the \textit{Regeneration based
bootstrap} is not second-order correct (its rate is $O_{\mathbb{P}}(n^{-1/2})$ only). To
overcome this limitation, a variation of this method, called \textit{Regenerative Block
bootstrap} (RBB), was introduced in \cite{Bertail2006}. This method consists in imitating
the renewal structure of the chain by sampling regeneration data blocks, until the length
of the reconstructed bootstrap series is larger than the length $n$ of the original data
series (notice the difference with the \textit{Regeneration based bootstrap}, where the
number of sampled blocks is equal to the number of regeneration blocks in the original
chain). It was shown in \cite{Bertail2006} that, for atomic positive recurrent Markov
chains, the RBB for estimating the integral of a function with respect to the invariant
probability, has a uniform rate of convergence of order $O_\P\left(n^{-1}\right)$ (the
same as in the i.i.d. case).

Despite all these efforts in the positive recurrent case, up to our knowledge, no
bootstrap method has been studied in the general null-recurrent scenario, although some
specific AR(1) models with unit roots have been the subject of investigations. It has
been shown that, for those AR(1) models the standard bootstrap methods (including
parametric ones) do not work \cite{Somnat1996, palm2008bootstrap} unless one works under
the null hypothesis of unit root (see \cite{ber94}). This idea can not be applied to
general null recurrent Markov chain unless one specifies the parametric or semiparametric
forms of the Markov chain. Hence, our objective in this paper is to propose a general
valid method and show that, provided that the estimators are correctly standardized, both
\textit{Regeneration based-bootstrap} and \textit{Regenerative Block bootstrap} are valid
schemes for estimating integrals with respect to the invariant measure when the Markov
chain is $\beta$-null recurrent and possesses an accessible atom. The task is challenging
because the expectation of the time of return to an atom of such Markov chains is
infinite, the bootstrap of such quantity does not work \cite{Athreya1987,Knight1989}:
indeed a necessary condition for the bootstrap to work is generally the finiteness of the
variance \cite{Csoergo2003}. Despite this fact, we will show that, by using the correct
random normalization, one can obtain a CLT for the mean and the validity of both
bootstrap regeneration methods in a null-recurrent framework.

The main difference between the methods described here and its counterparts in the
positive recurrent case is the fact that we use renormalization based on the number of
regenerations blocks (denoted by $\numreg$) instead of $n$ . Given that the number of
blocks is of order much smaller than $n$, it makes the method more restrictive. But to
our knowledge, there are no other bootstrap methods available in this null-recurrent
context (unless we consider a specific parametric model). Notice that, even standard
estimators will have rate of convergence $\sqrt{T(n)}$ with $T(n)$ or order $n^\beta$
(see \cite{Tjostheim-2001}) which of course also limits its applicability. For symmetric
random walks, $\beta=1/2$, it is known that estimators of linear functionals, kernels
estimators, volatility estimators have rate close to $n^{-1/4 }$, \cite{DelFoff2002}.

In order to make the exposition simpler, our asymptotic results will be specifically
stated for integrals with respect to the invariant measure, however, the procedures can
be applied to any statistic defined over a regeneration blocks, as long as it has finite
variance. Similarly, our results are stated for first order Markov chains, but they can
easily be extended to higher order chains by vectorization \cite[pp.
15]{markovChain2018}.

The paper is organized as follows: in section \ref{sec:ch4:markov_chains} we provide a
brief introduction to null recurrent Markov chains, making a special emphasis on atomic
ones and presenting the main results that we use throughout the paper. In subsection
\ref{sec:ch4:beta_null_recurrent} we present an extension of the Central Limit Theorem
for randomly indexed sequences (Lemma \ref{clt_general_iid}). Section \ref{sec:ch4:rbb}
is dedicated to the \textit{Regenerative Block bootstrap} in $\beta$-null recurrent
Markov chains, while Section \ref{sec:ch4:regeneration_bb} is devoted to the
\textit{Regeneration based-bootstrap}. In section \ref{sec:ch4:simulations} we have
performed two simulation studies to show the behavior of both algorithms in practice.
Section \ref{sec:conclusions} contains a few concluding remarks. The technical proofs are
postponed to Section \ref{sec:ch4:proofs}.

\section{A short introduction to null-recurrent Markov chains}\label{sec:ch4:markov_chains}

In this section, we introduce some notation and review some important concepts from
Markov chain theory that will be used throughout the paper. For more details, please
refer to \cite{Meyn2009,markovChain2018}.

\subsection{Notation and definitions}

Consider an homogeneous Markov chain \(\chain=\{X_0,X_1,\ldots\}\) on a countably
generated state space \((E,\SgE)\), with transition kernel \(P\) and initial probability
distribution \(\lambda\). This means that for any \(B\in\SgE\) and \(n\in\mathbb{N}\), we
have \(\mathcal{L}\left(X_0\right)=\lambda\) and
\begin{equation*}
    \P(X_{n+1}\in B\mid X_{0},\ldots ,X_{n})=P(X_{n},B)\quad\textnormal{almost surely}.
\end{equation*}

Note that the assumption of a countably generated state space is commonly used in Markov
chain theory to avoid pathological examples known as 'anormal' chains \cite{Doeblin1940}.
For more information on this topic, see \cite{Doeblin1940}, \cite{jainJamison1967}, and
\cite{Revesz2005}. An example of an 'anormal' chain can be found in \cite{Blackwell1945}.
This assumption does not significantly limit the generality of our results since most of
the time \(\SgE\) will be the borelian \(\sigma\)-algebra of \(\R^d\), which is countably
generated.

In the following, we use \(\mathbb{P}_{\lambda}\) (or \(\mathbb{P}_{x}\) for \(x\) in
\(E\)) to denote the probability measure on the underlying space such that
\(X_{0}\sim\lambda\) (or \(X_{0}=x\)). We use \(\E_{\lambda}\) to represent the
\(\mathbb{P}_{\lambda}\)-expectation (or \(\E_x\) to represent the
\(\mathbb{P}_{x}\)-expectation), and \(\indicator{\mathcal{A}}\) to represent the
indicator function of event \(\mathcal{A}\).

A homogeneous Markov chain is said to be irreducible if there exists a \(\sigma\)-finite
measure \(\phi\) on \((E,\SgE)\) such that for all \(x\in E\) and all \(A\in\SgE\) with
\(\phi(A)>0\), there exists some \(n\geq 1\) such that \(P^n(x,A)>0\). In this case,
there exists a maximal irreducibility measure \(\irreducibilityMeasure\) with respect to
which all other irreducibility measures are absolutely continuous. If $\chain$ is
\(\irreducibilityMeasure\)-irreducible, there is \(d^{\prime}\in\mathbb{N}^{\ast}\) and
disjoints sets \(D_{1},\ldots,D_{d^{\prime}}\) \(D_{d^{\prime}+1}=D_{1}\) weighted by
\(\psi\) such that \(\psi(E\backslash\cup_{1\leq i\leq d^{\prime}}D_{i})=0\) and
\(\forall x\in D_{i}, P(x,D_{i+1})=1.\) The g.c.d. \(d\) of such integers is called the
\textit{period} of the chain. $\chain$ is said to be \textit{aperiodic} if \(d=1\).

Thorough this paper, we assume that the Markov chains under consideration are
homogeneous, aperiodic, and irreducible with maximal irreducibility measure
\(\irreducibilityMeasure\).

An irreducible chain possesses an accessible atom, if there is a set \(\atom\in\SgE\)
such that for all \(x,y\) in \(\atom\):
\(P\left(x,\bullet\right)=P\left(y,\bullet\right)\) and
\(\irreducibilityMeasure(\atom)>0\). For instance, when a chain can take a countable
number of values, any single point visited by the chain is an atom. Denote by
\(\sigma_{\atom}\) and \(\tau_{\atom}\), respectively, the times of first visit and first
return of the chain to ${\atom}$, i.e. \(\tau_{\atom}=\operatorname{inf}\left\{ n\geq 1:
X_n\in \atom \right\}\) and \(\sigma_{\atom}=\operatorname{inf}\left\{ n\geq 0: X_n\in
\atom \right\}\). The subsequent visit and return times \(\sigma_{\atom},
\tau_{\atom}\left(k\right)\), \(k\geq 1\) are defined inductively as follows:
\begin{align}
    \tau_{\atom}\left(1\right) = \tau_{\atom}\quad     & ,\quad \tau_{\atom}\left(k\right) = \operatorname{min}\left\{ n>\tau_{\atom}\left( k-1 \right):X_n\in \atom\right\},     \\
    \sigma_{\atom}\left(1\right) = \sigma_{\atom}\quad & ,\quad \sigma_{\atom}\left(k\right) = \operatorname{min}\left\{ n>\sigma_{\atom}\left( k-1 \right):X_n\in \atom\right\}.
\end{align}

We use \(T_n(A)\) to represent the random variable that counts the number of times the
chain visits the set \(A\) up to time \(n\), i.e., \(T_n(A)=\sum_{t=0}^{n}{\I\{X_t\in
A\}}\). Similarly, we use \(T_{\infty}(A)\) to represent the total number of visits of
chain \(\chain\) to \(A\).

An atom \(\atom\) is called \textit{recurrent} if \(\E_x T_{\infty}(\atom)=+\infty\) for
all \(x\in \atom\); otherwise, it is called \textit{transient}. A notable property of
atomic chains is that all accessible atoms are either all recurrent or all transient.
Therefore, we say that an atomic chain is recurrent if one (and thus all) of its
accessible atoms is recurrent. If \(\chain\) is aperiodic, recurrent and possesses an
accessible atom, then the probability of returning infinitely often to the atom \(\atom\)
is equal to one, no matter the starting point, i.e.
\begin{equation*}
    \P_{x}\bigl(T_{\infty}(\atom)=\infty \bigr)=1\quad\textnormal\; \forall x\in E.
\end{equation*}

Denote by \(\P_{\atom}\) and \(\E_{\atom}\) the probability and the expectation
conditionally to \(X_0\in \atom\).

A fundamental tool for understanding the long-term behavior of Markov chains is the
existence of invariant measures, that is, a measure \(\pi\) such that
\begin{equation*}
    \pi \left( A \right) = \int {P\left( {x,A} \right)d\pi \left( x \right)}\quad\forall A\in\SgE.
\end{equation*}

Every irreducible and recurrent Markov chain admits a unique (up to a multiplicative
constant) invariant measure \cite[Theorem 10.4.9]{Meyn2009}. In the atomic case, the
invariant measure is just the occupation measure over the first block $\block_1=\left(
X_{\tau_{\atom}\left( 1 \right)+1},\ldots,X_{\tau_{\atom}\left( 2 \right)} \right)$
\cite[Theorem 6.4.2]{markovChain2018}, i.e.
\begin{equation}\label{eq:ch3:invariantMeasure}
    \pi_{\atom}\left(A\right) = \mathbb{E}_{\atom} \left( \sum_{j=1}^{\tau_{\atom}} \indicator{X_{j}\in A}  \right),
    \quad \forall A\in\SgE.
\end{equation}

An irreducible Markov chain is \textit{positive recurrent} if its invariant measure is
finite. When the invariant measure is just $\sigma$-finite, then the chain is called
\textit{null recurrent}. From \eqref{eq:ch3:invariantMeasure}, it is clear that an atomic
Markov chain is positive recurrent if and only if $\E_{\atom}{\tau_{\atom}<+\infty}$, and
in this case, the measure defined by ${\pi_{\atom}}/{\E_{\atom}\tau_{\atom}}$ is an
invariant probability for the chain. The existence of this invariant probability makes
the theory of positive recurrent Markov chains, very similar to the i.i.d. case
\cite[Chapter 17]{Meyn2009}.

Conversely, dealing with null recurrent chains is considerably more challenging, and a
comprehensive theory of non-parametric estimation for this type of chain does not exist.
To address this issue, Karlsen and Tjøstheim introduced in \cite{Tjostheim-2001} a
regularity condition for the tail behavior of the distribution of $\tau_{\atom}$ that
renders the problem more tractable. Specifically, denote by $\Gamma$ the gamma
function\footnote{The \(\Gamma\) function is defined as
\(\Gamma(x)=\int_{0}^{+\infty}{t^{p-1}\exp({-t})}\,dt\).}, then, a chain is referred to
as $\beta$-null recurrent if there is a constant $\beta\in(0,1)$ and a slowly varying
function\footnote{A measurable and positive function $L$ is said to be \textit{slowly
varying at $+\infty$} if it is defined in $[a,+\infty)$ for some $a\geq 0$, and satisfies
$\lim_{x\to +\infty}{{L\left( xt \right)}/{L\left( x \right)}}=1$ for all $t\geq a$. For
a detailed discussion on these types of functions, refer to \cite{Bingham1987}. } $L$
such that
\begin{equation}\label{tailCondition}
    \P_{\atom}\left(\tau_{\atom} > n\right)\sim\frac{1}{\Gamma(1-\beta)n^\beta L(n)}.
\end{equation}

The number $\beta$, also known as the \textit{regularity index} (see
\cite{Chen1999,Chen2000}) satisfies
\begin{equation*}
    \beta = \sup\left\{p>0:\E_{\atom}\left(\tau_{\atom}^p\right)<+\infty\right\}.
\end{equation*}

Some of the most well-known examples of $\beta$-null recurrent Markov chains are the
random walks in $\R$, which are $1/2$-null recurrent \cite{Kallianpur1954}, the Bessel
random walks \cite{Deconinch2009, Alexander2011} and some types of threshold
autoregressive (TAR) \cite{Gao2013} and vector autoregressive processes (VAR)
\cite{Myklebust-2012}. $\beta$-null recurrent Markov chains appear naturally in many
fields of statistics and probability for instance for studying population dynamics,
statistical mechanics or the study of Polymer.

\subsection{Renewal properties and Block decomposition}\label{sec:ch4:preliminary_remarks}

The \textit{strong Markov property} implies that the sample paths of an atomic Markov
chain can be partitioned into independent blocks of random length corresponding to
consecutive visits to $\atom$, given by:
\begin{align*}
    \block_0 & =\left( X_0,X_1,\ldots,X_{\tau_{\atom}\left( 1 \right)} \right)                              \\
    \block_1 & =\left( X_{\tau_{\atom}\left( 1 \right)+1},\ldots,X_{\tau_{\atom}\left( 2 \right)} \right)   \\
             & \ldots                                                                                       \\
    \block_n & =\left( X_{\tau_{\atom}\left( n \right)+1},\ldots,X_{\tau_{\atom}\left( n+1 \right)} \right) \\
             & \ldots
\end{align*}

Note that the distribution of $\block_0$ depends on the initial measure, and thus it does
not have the same distribution as $\block_j$ for $j\geq 1$. The sequence
$\{\tau_{\atom}(j)\}_{j\geq 1}$ defines successive times at which the chain forgets its
past, which are called \textit{regeneration times}. Similarly, the sequence of i.i.d.
blocks $\{\block_j\}_{j\geq 1}$ is called \textit{regeneration blocks}. As customary in
the \(\beta\)-null recurrent Markov chain literature, we will use \(\numreg\) to denote
the number of complete regeneration blocks up to time \(n\), i.e.
\(\numreg=\max\left(T_n(\atom)-1,0\right)\). We will denote by
$\ell\left(\block_i\right)$ the length of the $i$-th block, therefore,
\begin{equation}\label{eq:ch4:block_sizes}
    \ell\left(\block_j\right)=
    \begin{cases}
        \tau_{\atom}                                            & ,\quad j=0     \\
        \tau_{\atom}\left(j+1\right)-\tau_{\atom}\left(j\right) & ,\quad j\geq 1
    \end{cases}
\end{equation}

The random variable $\numreg$, and its relationship with
$\sum_{j=0}^{k}\ell\left(\block_j\right)$, is crucial in the theory we will develop in
this paper, therefore, we will state in this section its main properties in the
$\beta$-null recurrent scenario.

Assume $\chain$ is a $\beta$-null recurrent Markov chain with an accessible atom $\atom$.
By (3.27) in \cite{Tjostheim-2001}, the function $L$ in \eqref{tailCondition} can be
normalized in such a way that
\begin{equation}\label{eq:ch4:def_u_n}
    u(z)=z^\beta L(z)
\end{equation}
is a continuous function that is strictly increasing in the interval $[z_0,+\infty)$
for some $z_0\in\R_+$. Define $v(z)$ as
\begin{equation}\label{eq:ch4:def_v_n}
    v(z)=u^{(-1)}(z)=\inf\left\{s: u\left(s\right)>z\right\},
\end{equation}
then, $u\left(v(z)\right)=v\left(u(z)\right)=z$ for $z\geq z_0$.

Consider the space of càdlàg functions defined on the interval $[0,+\infty)$, denoted by
$\cadlaginf$. This space consists of the real functions that are right-continuous with
left limits and defined over $[0,+\infty)$. More precisely, a function $g\in\cadlaginf$
if and only if $g$ is right-continuous, has left limits at all points $t>0$, and
$\lim_{t\downarrow 0} g(t)=g(0)$. The space $\cadlaginf$ is equipped with the
Skorokhod\footnote{See Chapter 6 of \cite{JeanJacod2003} or Chapter 3 in
\cite{Billingsley1968} for more details about this space.} topology, making it a complete
and separable metric space. We will use $\convergence{\cadlaginf}$ to denote weak
convergence in this space, and $\convergence{\textnormal{fd}}$ for convergence of
finite-dimensional laws. Two stochastic processes $Y_n$, $Z_n$ in $\cadlaginf$ are said
to be \textit{equivalent} if $Y_n-Z_n$ converges weakly to the zero process. If
$Y_n\convergence{\cadlaginf}Y$ and $Y_n$ and $Z_n$ are equivalent, then
$Z_n\convergence{\cadlaginf}Y$ (see Lemma 3.31 in \cite{JeanJacod2003}).

Define the following processes
\begin{equation}\label{eq:ch4:t_n_process}
    T_n(t)=\frac{T\left(\left\lfloor nt \right\rfloor\right)}{u\left( n \right)},\quad C_n(t)=\frac{1}{v\left( n \right)}\sum_{k=0}^{\left\lfloor nt \right\rfloor}{\ell\left(\block_k\right)},
\end{equation}
and $C_n^{(-1)}(t)=\inf\{x:C_n(x)>t \}$.
The following Theorem, proved in \cite{Tjostheim-2001}, shows that these three processes converge
in $\cadlaginf$ and that $T_n$ and $C_n^{(-1)}$ are equivalent.
\begin{theorem}\label{th:ch4:convergence_process} Assume $\chain$ is a $\beta$-null recurrent atomic
    Markov chain. Then,
    \begin{enumeration}
        \item $C_n\convergence{\cadlaginf} S_{\beta}$ where $S_\beta$ is the one-sided stable Levy process
        defined by the marginal characteristics
        \[\E\left[ \exp\left( is S_\beta(t) \right) \right]=\exp\left( is^\beta t \right)\;s\in\R,t\in[0,+\infty].\]
        \item $C_n^{(-1)}$ and $T_n$ are equivalent processes and both converge in $\cadlaginf$ to the
        Mittag-Leffler process of parameter $\beta$.
    \end{enumeration}
\end{theorem}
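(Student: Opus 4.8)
The plan is to prove this theorem by establishing the convergence of $C_n$ first (part i), then deducing the equivalence and convergence of $C_n^{(-1)}$ and $T_n$ (part ii) via the classical inverse/hitting-time duality for subordinators and renewal processes.

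The plan is to obtain both parts from the classical theory of sums of i.i.d.\ heavy-tailed random variables together with the duality between partial-sum processes and their associated counting (renewal) processes, following the route of \cite{Tjostheim-2001}. For part (i), the starting point is that the block lengths $\{\ell(\block_k)\}_{k\geq 1}$ are i.i.d.\ and, by the tail condition \eqref{tailCondition}, satisfy $\P_\atom(\ell(\block_1)>x)\sim(\Gamma(1-\beta)\,u(x))^{-1}$, so they lie in the domain of attraction of a one-sided $\beta$-stable law. First I would invoke the functional stable limit theorem for partial sums of such variables (e.g.\ in the form of Chapter VII of \cite{JeanJacod2003}). The normalisation $v(n)=u^{(-1)}(n)$ is chosen precisely so that, using that $u$ is regularly varying of index $\beta$, $n\,\P_\atom(\ell(\block_1)>v(n)\,s)\to\Gamma(1-\beta)^{-1}s^{-\beta}$; this pins down the Lévy measure of the limit and yields $C_n\convergence{\cadlaginf}S_\beta$ with the stated exponent. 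The one extra ingredient is that the first block $\block_0$ has a different law: since $\ell(\block_0)=\tau_\atom<\infty$ almost surely and $v(n)\to\infty$, the term $\ell(\block_0)/v(n)\to 0$, so $\block_0$ is an asymptotically negligible perturbation and can be discarded via the equivalence lemma (Lemma 3.31 in \cite{JeanJacod2003}).

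For part (ii) I would use that the generalised-inverse (first-passage) functional $g\mapsto g^{(-1)}$ is continuous, for the Skorokhod topology, at every $g$ that is strictly increasing and unbounded. A $\beta$-stable subordinator is a.s.\ strictly increasing with no flat stretches, so $S_\beta^{(-1)}$ is a.s.\ continuous; hence the continuous mapping theorem applied to part (i) gives $C_n^{(-1)}\convergence{\cadlaginf}S_\beta^{(-1)}$, and $S_\beta^{(-1)}$ is by definition the Mittag-Leffler process of index $\beta$ (since $S_\beta$ is self-similar of index $1/\beta$, its inverse scales as $t^\beta$, giving the Mittag-Leffler marginals). It then remains to compare $T_n$ with $C_n^{(-1)}$. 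The key is the renewal identity: writing $S_k:=\sum_{j=0}^{k}\ell(\block_j)=\tau_\atom(k+1)$, the visit counter satisfies $T(m)=\#\{k\geq 1: S_k\leq m\}$, which differs from the renewal count $\inf\{k:S_k>m\}$ by only an $O(1)$ boundary term; after division by $u(n)\to\infty$ this term is negligible. Combining this with the scaling relations $u(v(n))=v(u(n))=n$ aligns the two natural normalisations and identifies the limit of $T_n$ as the same inverse-stable process $S_\beta^{(-1)}$, so that $T_n$ and $C_n^{(-1)}$ share the Mittag-Leffler limit and are equivalent in the sense recalled above.

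The step I expect to be the main obstacle is this inversion argument in part (ii). One must justify the continuity of the first-passage map at the limit $S_\beta$, which is a pure-jump process whose inverse is nonetheless continuous, and then carefully control the boundary and first-block corrections while reconciling the two time-normalisations — real time (scaled by $n$) for $T_n$ against block index (scaled by $v(n)$) for $C_n^{(-1)}$ — which are linked only through the regular-variation relation $u\circ v=\mathrm{id}$. Once the continuity of the inverse map at strictly increasing càdlàg functions is in place and this scaling relation is exploited, both convergences in part (ii) follow from part (i) and the renewal duality.
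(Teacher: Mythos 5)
The paper does not actually prove this theorem: it is quoted from Karlsen and Tj{\o}stheim \cite{Tjostheim-2001} (their Theorem 3.2, resting on their Theorem A.1), so your attempt can only be judged against that argument. Your part (i) is correct and is the standard route: under $\P_{\atom}$ the lengths $\ell(\block_k)$, $k\geq 1$, are i.i.d.\ with tail $\P_{\atom}\bigl(\ell(\block_1)>x\bigr)\sim\bigl(\Gamma(1-\beta)u(x)\bigr)^{-1}$, the choice $v(n)=u^{(-1)}(n)$ gives $n\,\P_{\atom}\bigl(\ell(\block_1)>s\,v(n)\bigr)\to s^{-\beta}/\Gamma(1-\beta)$, and $\ell(\block_0)/v(n)\to 0$ a.s.\ disposes of the non-stationary first block. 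The first half of your part (ii) is also sound: the first-passage map is continuous at strictly increasing unbounded paths, $S_\beta$ is a.s.\ strictly increasing, so the continuous mapping theorem gives $C_n^{(-1)}\convergence{\cadlaginf}S_\beta^{(-1)}=M_\beta$.

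The genuine gap is your last step, exactly at the obstacle you yourself flagged. Equivalence here means that $T_n-C_n^{(-1)}$ converges weakly to the \emph{zero} process; inferring it from the fact that $T_n$ and $C_n^{(-1)}$ ``share the Mittag--Leffler limit'' is a non sequitur (two independent copies of a sequence share a limit without being equivalent). Worse, the renewal duality you invoke, carried out exactly, does not pair $T_n$ with $C_n^{(-1)}$: writing $S_k=\sum_{j=0}^{k}\ell(\block_j)$, one gets the pathwise identities
\begin{equation*}
    C_n^{(-1)}(t)=\frac{T\left(\lfloor t\,v(n)\rfloor\right)+1}{n},
    \qquad
    T_n(t)=\inf\left\{x:\frac{1}{n}\sum_{k=0}^{\lfloor u(n)x\rfloor}\ell\left(\block_k\right)>t\right\}-\frac{1}{u(n)},
\end{equation*}
so $T_n$ is (uniformly, up to $1/u(n)$) the inverse of the partial-sum process at block-index scale $u(n)$ and space scale $n$, whereas $C_n^{(-1)}$, with the scalings of \eqref{eq:ch4:t_n_process}, is the visit counter at time scale $v(n)$ with normalization $n$. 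The relation $u(v(n))=v(u(n))=n$ alone cannot merge the two: on a compact interval $T_n$ is a functional of the first $O_{\P}(u(n))$ blocks, while $C_n^{(-1)}$ is a functional of the first $O_{\P}(n)$ blocks, and since those first $O(u(n))$ blocks contribute only $O_{\P}(n/v(n))=o_{\P}(1)$ to $C_n$, the two functionals are asymptotically \emph{independent}; consequently $T_n(1)-C_n^{(-1)}(1)$ converges to a difference of two independent Mittag--Leffler variables, not to $0$. The equivalence holds only for the duality-matched pairing --- $T_n$ against the inverse of the sum process at scales $(u(n),n)$ displayed above, where it is then essentially a pathwise identity, or equivalently $T(\lfloor v(n)t\rfloor)/n$ against the $C_n$ written in the statement --- and a complete proof must make this matching explicit and then perform a uniform-in-$t$ comparison, which is precisely what the argument of \cite{Tjostheim-2001} supplies (and what its bootstrap adaptation, Lemma \ref{lemma:t_t_n_equivalence_bootstrap}, reproduces, the matching there being enforced by the empirical normalization $v^{\ast}(\numreg)\leq n< v^{\ast}(\numreg+1)$). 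Your proposal keeps the unmatched pair and justifies equivalence by coincidence of limits, so this step fails as written.
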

\begin{remark}
    The Mittag-Leffler process with parameter \(\beta\) is defined as the inverse of \(S_\beta\).
    It is a strictly increasing continuous stochastic process defined as
    \begin{equation*}
        M_\beta(t)=t^\beta M_\beta\left(1\right)\quad,\quad \E \left(M^m_\beta\left(1\right)\right)=\frac{m!}{\Gamma\left(1+m\beta\right)}\;\;m\geq 0.
    \end{equation*}
\end{remark}

Theorem \ref{th:ch4:convergence_process} shows a striking difference between positive and
null recurrent Markov chains. While in the former the existence of moments for
$\ell\left(B_j\right)$ implies that $C_n$ and $T_n$ (taking $u(n)=n$) converge almost
surely respectively to $t\E_{\atom}\tau_{\atom}$ and ${t}/{\E_{\atom}\tau_{\atom}}$, and
therefore, $\numreg$ can be approximated almost surely by the deterministic quantity
$n/{\E_{\atom}\tau_{\atom}}$, in the latter, we only have weak convergence, hence
$\numreg$ can only be controlled by the deterministic quantity $u(n)$ in distribution.

\subsection{Properties of linear functionals defined on $\beta$-null recurrent
    chains}\label{sec:ch4:beta_null_recurrent}

For a measurable function \(f:E\rightarrow\mathbb{R}\), and an atomic Markov chain
$\chain$ with an accessible atom $\atom$, consider the problem of estimating
\(\pi_{\atom}(f)=\int f d\pi_{\atom}\), where $\pi_{\atom}$ is as in
\eqref{eq:ch3:invariantMeasure} and $\pi_{\atom}(f)<+\infty$. Denote by \(S_n(f)\) the
partial sums of \(f\) over the chain, that is
\begin{equation}\label{eq: ch4: partial_sums_f_chain}
    S_n(f)=\sum_{k=0}^{n}{f\left(X_k\right)}.
\end{equation}

The Ratio Limit Theorem for atomic chains \cite[Theorem 6.6.2]{markovChain2018} shows
that if $g$ is a measurable function, then, for every invariant measure $\pi$ we have
\begin{equation}\label{eq:ch4:ratio_limit_theorem}
    \frac{S_n(f)}{S_n\left(g\right)}\as\frac{\pi(f)}{\pi\left(g\right)},
\end{equation}
as long as $\pi\left(g\right)\neq 0$.

\begin{remark}\label{rm:ch4:non_convergence_null_recurrent}From \eqref{eq:ch4:ratio_limit_theorem} is
    clear that ${S_n(f)}/{\numreg}$ is a strongly consistent estimator of
    $\pi_{\atom}(f)$, and, in the positive recurrent case,
    ${S_n(f)}/{n}\as{\pi_{\atom}(f)}/{\E_{\atom}\tau_{\atom}}$. In the
    null recurrent case, however, ${S_n(f)}/{n}\as 0$ (see Corollary 6.6.3 in
    \cite{markovChain2018}) and there is no deterministic sequence $a\left(n\right)$ such
    that ${S_n(f)}/{a(n)}$ converges almost surely to a non-zero limit \cite{Chen1999}.
\end{remark}

Given that our interest in this paper is to apply the bootstrap method to the study of
$\pi_{\atom}(f)$ we need to find a series of i.i.d. random variables whose mean strongly
converges to $\pi_{\atom}(f)$. To do this, define the following random variables
\begin{equation*}
    f(\block_j)=
    \begin{cases}
        \sum\limits_{i=0}^{\tau_{\alpha}}{f\left( X_i \right)}                                                & ,\quad j=0     \\
        \sum\limits_{i=\tau_{\atom}\left( j \right)+1}^{\tau_{\alpha}\left( j+1 \right)}{f\left( X_i \right)} & ,\quad j\geq 1
    \end{cases}.
\end{equation*}

The strong Markov property implies that under \(\P_{\atom}\), the sequence
\(\{f(\block_j)\}_{j\geq 0}\) is i.i.d. Moreover, for every initial probability
\(\lambda\) such that \(\P_{\lambda}\left(\tau_{\atom}<\infty\right)=1\), the random
variables \(f(\block_j),j\geq 0\) are independent and for \(j\geq 1\) they are i.i.d.
Therefore, $S_n(f)$ can now be written as a sum of independent random variables as
follows:
\begin{equation}\label{eq: ch4: split_sum_equation}
    S_n(f)=f\left(\block_0\right)+\sum_{j=1}^{T(n)}{f\left( \block_j \right)}+\sum_{i=\tau_{\atom}\left( T(n)+1 \right)+1}^n{f\left(X_i\right)},
\end{equation}
with the convention that the sum of an empty set is 0. As customary in the $\beta$-null
recurrent literature, we will denote the last term in \eqref{eq: ch4: split_sum_equation}
by $f(\block_{(n)})$.

Equation \eqref{eq:ch3:invariantMeasure} indicates that
\(\E_{\atom}{f(\mathcal{B}_{j})}=\pi_{\atom}(f)\) for \(j\geq 1\), hence, if we assume
that $\pi_{\atom}(|f|)<+\infty$, the Law of Large Numbers for randomly indexed sequences
\cite[Theorem 8.2, pp 302]{Gut2013} shows that
\begin{equation}\label{eq:ch4:lln}
    \frac{1}{T(n)}\sum_{j=1}^{T(n)}{f(\mathcal{B}_{j})}\as\pi_{\atom}(f).
\end{equation}

\begin{remark}\label{rm:ch4:negligible_blocks}

    The recurrence of the chain implies that \(\numreg\to\infty\) almost surely,
    therefore \(f(\block_0)/\numreg\) and \(f(\block_{(n)})/\numreg\) converge to 0
    almost surely (see Lemma 1 in \cite{Athreya-2016}). This allows us to consider only
    the i.i.d. blocks \(f(\block_j),j\geq 1\) in our estimations.

\end{remark}


If we suppose further that \(f(\mathcal{B}_{1})\) has finite second moment, and we denote
by $\sigma^2$ the variance of \(f(\mathcal{B}_{1})\), then
\begin{equation}\label{eq:ch4:varianceEstimator}
    \hat{\sigma}_n^2=\frac{1}{T(n)}\sum_{j=1}^{T(n)}{\left(\fblock-\frac{1}{T(n)}\sum_{i=1}^{T(n)}{\fblock[i]}\right)^2}\as\sigma^2.
\end{equation}

Much of the work carried out in this investigation deals with sequences indexed by the
sequence of random variables \(\numreg\). As explained at the end of Section
\ref{sec:ch4:preliminary_remarks}, this sequence, although it converges almost surely to
\(+\infty\), can not be deterministically approximated in probability, it only admits an
approximation in distribution. This creates huge problems, even for simple tasks, as to
obtaining a CLT, because CLTs for randomly indexed sequences (see \cite{Anscombe1952} for
the original formulation and Th. 17.2 in \cite{Billingsley1968} for its more general
form) require being able to control deterministically, at least in probability, the
sequence of the number of terms. The result we present below extends this CLT, replacing
the requirement of the control in probability by the existence of the limit of a
stochastic process defined in terms of the sequence of the number of terms.

\begin{lemma}[CLT for randomly indexed sequences]\label{clt_general_iid}
    Let \(X_{1},X_2\ldots\) be i.i.d. random variables such that
    \(\E(X_{1})=\mu\) and \(\Var{}{X_{1}}=\sigma^2>0\). Let \(N(n)\)
    be a sequence of integer-valued random variables. Assume there exists an unbounded
    increasing sequence of real numbers \(u_n\) such that the process
    \(N_n(t)={N(\floor{nt})}/{u_n}\) satisfies the following conditions:
    \begin{itemize}
        \item There exists a process \(S_n\) in \(\cadlaginf\) that is non-negative and
              non-decreasing for each \(n\).
        \item \(S_n\convergence{\cadlaginf} S\) where \(S\) is
              a strictly increasing non-negative process with independent
              increments, no fixed jumps, and \(S(0)\equiv 0\).
        \item \(N_n\) is equivalent to \(S_n^{(-1)}\).
    \end{itemize}
    Then, \(N_n\) converges to \(S^{(-1)}\) in \(\cadlaginf\),
    \begin{equation*}
        \frac{\sqrt{N(n)}}{\sigma}\left(\frac{1}{N(n)}\sum_{j=1}^{N(n)}\left(X_j-\mu\right)\right)\convergence{d}\Normal(0,1),
    \end{equation*}
    and $N_n(1)$ and $\frac{\sqrt{N(n)}}{\sigma}\left(\frac{1}{N(n)}\sum_{j=1}^{N(n)}\left(X_j-\mu\right)\right)$ are asymptotically independent.
\end{lemma}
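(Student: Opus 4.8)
The plan is to handle the three assertions in turn, reducing the CLT and the asymptotic independence to a single random time change in $\cadlaginf$. For the functional convergence of $N_n$, I would use that the first-passage (inverse) functional $x\mapsto x^{(-1)}$ is continuous, for the Skorokhod topology, at every path that is strictly increasing; since $S$ is strictly increasing its inverse $S^{(-1)}$ is continuous and the inversion map is a.s.\ continuous at $S$, so the continuous mapping theorem applied to $S_n\convergence{\cadlaginf}S$ yields $S_n^{(-1)}\convergence{\cadlaginf}S^{(-1)}$ (this is the same continuity argument underlying part (ii) of Theorem \ref{th:ch4:convergence_process}). Because $N_n$ is equivalent to $S_n^{(-1)}$, the transfer principle recalled just before Theorem \ref{th:ch4:convergence_process} (Lemma 3.31 in \cite{JeanJacod2003}) then gives $N_n\convergence{\cadlaginf}S^{(-1)}$. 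Note for later use that $S^{(-1)}$ is continuous, strictly increasing, and satisfies $S^{(-1)}(1)>0$ a.s.\ (as $S$ has no fixed jump at $0$).

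Next I introduce the Donsker process $W_n(t)=\frac{1}{\sigma\sqrt{u_n}}\sum_{j=1}^{\floor{u_n t}}(X_j-\mu)$, which by the classical invariance principle (finite variance of $X_1$ suffices) satisfies $W_n\convergence{\cadlaginf}B$ for a standard Brownian motion $B$. The crux of the reduction is the identity, valid because $N(n)$ is integer-valued and $N_n(1)=N(n)/u_n$,
\[ W_n\bigl(N_n(1)\bigr)=\frac{1}{\sigma\sqrt{u_n}}\sum_{j=1}^{N(n)}(X_j-\mu), \]
so that the random-index sum is exactly the composition $(W_n\circ N_n)(1)$. The strategy is therefore: (i) upgrade $W_n\convergence{\cadlaginf}B$ to the \emph{joint} convergence $(W_n,S_n)\convergence{}(B,S)$ with $B$ independent of $S$; (ii) push this through the continuous inversion map and through the equivalence of $N_n$ and $S_n^{(-1)}$ (a Slutsky step in the product space) to get $(W_n,N_n)\convergence{}(B,S^{(-1)})$ with $B$ independent of $S^{(-1)}$; and (iii) invoke continuity of the composition map $(x,\phi)\mapsto x\circ\phi$ at pairs whose inner path $\phi=S^{(-1)}$ is continuous and increasing and whose outer path $x=B$ is continuous. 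Step (iii) yields $(W_n\circ N_n,\,N_n)\convergence{}(B\circ S^{(-1)},\,S^{(-1)})$, and evaluating the first coordinate at the continuity point $t=1$ gives the joint limit $\bigl(W_n(N_n(1)),\,N_n(1)\bigr)\convergence{d}\bigl(B(S^{(-1)}(1)),\,S^{(-1)}(1)\bigr)$.

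To conclude, I would use that the independence $B\perp S^{(-1)}$ together with Brownian scaling gives $B(S^{(-1)}(1))\overset{d}{=}\sqrt{S^{(-1)}(1)}\,Z$ with $Z\sim\Normal(0,1)$ independent of $S^{(-1)}(1)$. Since the standardized statistic equals $\frac{1}{\sqrt{N_n(1)}}\,W_n(N_n(1))$ and $N_n(1)\convergence{d}S^{(-1)}(1)>0$, a continuous-mapping/Slutsky step applied to the joint limit above divides out the factor $\sqrt{S^{(-1)}(1)}$ and produces $\bigl(\tfrac{1}{\sqrt{N_n(1)}}W_n(N_n(1)),\,N_n(1)\bigr)\convergence{d}\bigl(Z,\,S^{(-1)}(1)\bigr)$ with $Z\sim\Normal(0,1)$ independent of $S^{(-1)}(1)$. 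The first marginal is precisely the asserted CLT, and the independence of the two coordinates of the limit is exactly the claimed asymptotic independence of the standardized sum and $N_n(1)$.

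The main obstacle is step (i): the joint convergence with $B$ \emph{independent} of $S$. This is where the hypotheses that $S$ be strictly increasing, with independent increments and no fixed jumps, are essential. In the intended applications $W_n$ and $S_n$ are built from the \emph{same} i.i.d.\ regeneration blocks and are genuinely dependent for each fixed $n$, so independence can only appear in the limit; the mechanism is a decoupling of scales, namely that the stable-type subordinator $S$ is purely discontinuous and its jumps are produced by asymptotically negligibly many extreme summands, which contribute nothing to the finite-variance Gaussian limit $B$. Concretely I would establish that Donsker's invariance principle holds $\mathcal{G}$-stably, in the mixing sense of Rényi, with respect to the $\sigma$-field $\mathcal{G}$ carrying the limit $S$; stable convergence against $S_n$ is equivalent to the required joint convergence to a product limit. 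The remaining ingredients—continuity of inversion at strictly increasing paths and of composition at continuous increasing inner paths—are standard Skorokhod-space facts (see \cite{JeanJacod2003, Billingsley1968}) and become routine once the correct a.s.\ continuity points of the limit have been identified.
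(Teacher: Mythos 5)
Your reduction coincides with the paper's: both write the randomly indexed sum as the composition of a Donsker process (your $W_n$, the paper's $Q_{u_n}$) with the counting process $N_n$, transfer the convergence of $S_n^{(-1)}$ to $N_n$ through the equivalence hypothesis, and finish with a random time change and Brownian scaling at $t=1$. The divergence is at the step you yourself flag as the main obstacle. The paper never proves the joint convergence with independent limits: it imports it wholesale by applying Lemma \ref{cadlagFiniteConvergenceInf} (part A.3 of Theorem A.1 in \cite{Tjostheim-2001}) with $A_n=S_n$ and $B_n=Q_{u_n}$; that cited result already packages the inversion convergence, the time change, and the asymptotic independence of $A_n^{(-1)}(t)$ and the normalized composition, so the paper's proof is a few lines of bookkeeping around the identity $Q_{u_n}(N_n(t))=\sigma^{-1}u_n^{-1/2}\sum_{j=1}^{N(\floor{nt})}(X_j-\mu)$. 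You propose instead to prove this step, and the proposed mechanism does not work.

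The gap is the assertion that ``stable convergence against $S_n$ is equivalent to the required joint convergence to a product limit.'' Mixing (R\'enyi-stable) convergence of $W_n$ yields product-form joint limits against a \emph{fixed} random element, or against a sequence converging \emph{in probability} to one; it yields nothing against a sequence $S_n$ that converges only \emph{in distribution}. That is precisely the situation this lemma is designed for: as the paper stresses (end of Subsection \ref{sec:ch4:preliminary_remarks} and the discussion preceding the lemma), in the null-recurrent setting $\numreg/u(n)$ has no limit in probability, which is exactly why Corollary \ref{cor: clt_original} --- essentially what a mixing argument reproves --- is not applicable. In particular, there is no $\sigma$-field $\mathcal{G}$ ``carrying the limit $S$'' on the original space; to manufacture one you must pass to a Skorokhod representation preserving the joint law of $(W_n,S_n)$, and on that new space the mixing of $W_n$ relative to $\sigma(S)$ is not inherited --- it is equivalent to the very independence you are trying to prove. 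Moreover, the claimed implication is false as a general principle: take the $X_j$ to be the steps of the simple symmetric random walk, $u_n=n$, $W_n(t)=n^{-1/2}\sum_{j\leq \floor{nt}}X_j$, and $S_n(t)=n^{-1}\tau_{\floor{\sqrt{n}\,t}}$, where $\tau_k$ is the $k$-th return time of the walk to $0$. Then $W_n\convergence{\cadlaginf}B$ mixing, and $S_n$ is non-negative, non-decreasing, and converges in distribution to a $1/2$-stable subordinator (strictly increasing, independent increments, no fixed jumps, null at $0$); yet the joint limit of $(W_n,S_n)$ is $(B,A)$ with $A$ the inverse local time of $B$ at $0$ --- a functional of $B$, not independent of it --- and $B\circ A^{(-1)}(t)/\sqrt{A^{(-1)}(t)}=B(L_t)/\sqrt{L_t}$ is neither standard normal nor independent of $L_t$ (conditioning on a large value $\ell$ of $L_t$ forces the path to hug the origin, so the conditional variance of $B(L_t)$ is strictly smaller than $\ell$).

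So the asymptotic independence cannot be extracted by soft arguments from the marginal convergences plus mixing; it is a structural theorem about the specific pair formed by the Gaussian part of the block sums and the stable part of the block lengths, where the few largest blocks drive the stable limit while being negligible for the Gaussian one. Steps (ii)--(iii) of your plan (inversion, Slutsky in the product space, composition, scaling, positivity of $S^{(-1)}(1)$) are fine and match what the cited lemma delivers, but without either invoking the Karlsen--Tj{\o}stheim result, as the paper does, or reproducing its proof, your argument is incomplete at its central point.
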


\begin{corollary}\label{cor: clt_original}[Theorem 17.2 in \cite{Billingsley1968}] Suppose \(X_1,\ldots,X_n\) are i.i.d. with \(\E X_1=\mu\)
    and \(\Var{}{X_{1}}=\sigma^2\). If \(N(n)\) is a sequence of integer-valued
    random variables such that
    \begin{equation}\label{eq: cond_convergence_prob}
        \frac{N(n)}{u_n}\convergence{p}\theta,
    \end{equation}
    where \(\theta\) is a positive random variable and \(u_n\) is a sequence of positive numbers
    going to infinity, then
    \begin{equation*}
        \frac{\sqrt{N(n)}}{\sigma}\left(\frac{1}{N(n)}\sum_{j=1}^{N(n)}\left(X_j-\mu\right)\right)\convergence{d}\Normal(0,1).
    \end{equation*}
\end{corollary}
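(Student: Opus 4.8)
The plan is to read the statement as the classical random-index central limit theorem and to reduce it to Donsker's invariance principle followed by a random time change and a self-normalization. Throughout, write $R_m=\sum_{j=1}^{m}(X_j-\mu)$ for the centred partial sums, so that the target quantity is $R_{N(n)}/(\sigma\sqrt{N(n)})$. First I would rescale these sums into the process
\begin{equation*}
    Y_n(t)=\frac{1}{\sigma\sqrt{u_n}}\,R_{\floor{u_n t}},\qquad t\geq 0,
\end{equation*}
and invoke Donsker's theorem to obtain $Y_n\convergence{\cadlaginf}W$, where $W$ is a standard Brownian motion; note that $W$ has continuous paths, which will be essential for the time-change step.

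Next I would treat the case of a constant limit $\theta\equiv c>0$. Since $N(n)/u_n\convergence{p}c$, evaluating $Y_n$ at the random time $N(n)/u_n$ amounts to a random change of time converging to the deterministic instant $c$; because the limit $W$ is continuous, the random-change-of-time theorem (Theorem 17.1 in \cite{Billingsley1968}) yields
\begin{equation*}
    Y_n\!\left(\frac{N(n)}{u_n}\right)=\frac{R_{N(n)}}{\sigma\sqrt{u_n}}\convergence{d}W(c)\sim\Normal(0,c).
\end{equation*}
Multiplying by $\sqrt{u_n/N(n)}\convergence{p}c^{-1/2}$ and applying Slutsky's lemma gives $R_{N(n)}/(\sigma\sqrt{N(n)})\convergence{d}W(c)/\sqrt{c}\sim\Normal(0,1)$, as required.

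To reach a genuinely random $\theta$ I would condition on $\theta$: conditionally on $\{\theta=c\}$ the hypothesis becomes the constant-limit case already handled, and the resulting law $\Normal(0,1)$ is the same for every value $c$. Integrating out $\theta$ therefore leaves the standard Gaussian limit unchanged, which proves the corollary.

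The hard part will be making the random time change fully rigorous, and in particular controlling the random index $N(n)$ against the deterministic scale $u_n$. The right tool here is Anscombe's uniform continuity condition, which for i.i.d. summands with finite variance follows from Kolmogorov's maximal inequality: for every $\eps>0$,
\begin{equation*}
    \lim_{\delta\downarrow 0}\ \limsup_{m\to\infty}\ \P\left(\max_{|k-m|\leq \delta m}\left|R_k-R_m\right|>\eps\,\sigma\sqrt{m}\right)=0,
\end{equation*}
so that $R_{N(n)}$ may be replaced by $R_m$ at a deterministic index $m\approx u_n\theta$ without altering the limit. A delicate point worth stressing is that the conditioning reduction, and the clean Gaussian limit it produces, rest on the self-normalization cancelling the random scale $\theta$; this is exactly where the corollary's convergence-in-probability hypothesis does the work, and exactly the feature that is lost in the Markov-chain application (where $N(n)=\numreg$ and $X_j=\fblock$ are manufactured from the same regeneration blocks), motivating the asymptotic-independence statement of Lemma~\ref{clt_general_iid}.
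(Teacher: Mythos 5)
Your constant-$\theta$ case is sound: Donsker's theorem, the random change of time at an index converging in probability to a constant (Billingsley's Theorem 17.1, using that convergence in probability to a constant upgrades to joint convergence with $Y_n$), and Slutsky together give a complete argument there. The genuine gap is the reduction of random $\theta$ to this case by ``conditioning on $\{\theta=c\}$''. First, if $\theta$ has a continuous distribution these events are null, so the conditional measure is not even defined without further work. More seriously, even when $\P(\theta=c)>0$, under $\P(\cdot\mid\theta=c)$ the $X_j$ are in general no longer i.i.d.: nothing in the hypotheses prevents $\theta$ and $N(n)$ from being built from the $X_j$ themselves --- take, say, $\theta=1+\indicator{X_1>0}$ --- and then Donsker's theorem simply does not hold under the conditional law, so ``the hypothesis becomes the constant-limit case already handled'' is false as stated. (You yourself flag exactly this dependence in the Markov application, where $\numreg$ and the $\fblock$ come from the same trajectory, which should have raised the alarm here.) The Anscombe maximal-inequality condition you quote is the right tool for a different step --- replacing $R_{N(n)}$ by $R_{\floor{u_n\theta}}$ --- but it does not rescue the conditioning: after that replacement you still need a CLT for $R_{\floor{u_n\theta}}/(\sigma\sqrt{u_n\theta})$ with random $\theta$, which is the same problem again.

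The missing ingredient is the mixing (Rényi) property of the CLT, or equivalently joint convergence of $\bigl(Y_n, t\theta\bigr)$ to $\bigl(W, t\theta\bigr)$: for any fixed event $A$ with $\P(A)>0$ one has $\P\bigl(\{R_m/(\sigma\sqrt{m})\leq x\}\cap A\bigr)\to\Phi(x)\P(A)$, because $A$ can be approximated by events depending on finitely many coordinates, whose contribution to $R_m/\sqrt{m}$ is negligible. With that, the classical proof discretizes $\theta$ into finitely many values $c_i$, uses the Anscombe step on each event $\{\theta=c_i\}$, applies mixing to each piece, and sums; this is essentially how Billingsley proves Theorem 17.2. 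The paper's own proof is different again and much shorter: it derives the corollary as a special case of Lemma \ref{clt_general_iid}, first for bounded $\theta$ by exhibiting the processes $S_n(t)=t/\theta$ and $S_n^{(-1)}(t)=t\theta$ so that $N_n$ is equivalent to $S_n^{(-1)}$, then handling unbounded $\theta$ by the truncation argument on pp.~148 of \cite{Billingsley1968}; there the delicate joint-convergence issue is absorbed into Lemma \ref{cadlagFiniteConvergenceInf}, which delivers the asymptotic independence of the limit from $N_n(1)$ rather than assuming it. As written, your route is genuinely different but incomplete at precisely the step where the randomness of $\theta$ matters.
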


Using Lemma \ref{clt_general_iid} and Theorem \ref{th:ch4:convergence_process} we can
provide a different proof of the following Central Limit Theorem for $\beta$-null
recurrent atomic Markov chains, which was originally proved in \cite{Athreya2015}.

\begin{theorem}\label{cltLemma}

    Let $\chain$ be a $\beta$-null recurrent Markov chain, with an accessible atom
    $\atom$. For every $\pi_{\atom}$- measurable function $f$ such that
    $\sigma^2=\Var\fblock[1]$ is finite, we have the following convergence in
    distribution:
    \begin{equation}
        \frac{\sqrt{T(n)}}{\sigma}\left(\frac{1}{T(n)}\sum\limits_{j=1}^{T(n)}{f(\mathcal{B}_{j})}-\int f \,d\pi_{\atom}\right)\convergence{d}\Normal(0,1).\label{clt}
    \end{equation}
    Moreover, \({T(n)}/{n^\beta L(n)}\) converges to a
    Mittag-Leffler distribution with parameter \(\beta\) and it is asymptotically
    independent of the left-hand side of \eqref{clt}. If in addition we also have
    $\E [|f|(\block_1)^2]<+\infty$, then
    \begin{equation}
        \frac{\sqrt{T(n)}}{\sigma}\left(\frac{S_n(f)}{T(n)}-\int f \,d\pi_{\atom}\right)\convergence{d}\Normal(0,1).\label{clt_non_blocks}
    \end{equation}
\end{theorem}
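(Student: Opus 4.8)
The plan is to read off \eqref{clt}, the Mittag--Leffler limit, and the asymptotic independence as a single application of Lemma \ref{clt_general_iid}, and then to derive \eqref{clt_non_blocks} from the block decomposition \eqref{eq: ch4: split_sum_equation} by proving that the two incomplete boundary blocks are negligible at the scale $\sqrt{\numreg}$. Throughout I work under $\P_{\atom}$ (or under any $\P_{\lambda}$ with $\P_{\lambda}(\tau_{\atom}<\infty)=1$), so that the variables $f(\block_j)$, $j\geq 1$, are i.i.d.; I also take $\sigma^2>0$, as required to divide by $\sigma$.

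First I would apply Lemma \ref{clt_general_iid} with $X_j=f(\block_j)$, which are i.i.d. with $\E_{\atom}f(\block_1)=\pi_{\atom}(f)$ by \eqref{eq:ch3:invariantMeasure} and variance $\sigma^2$, with random index $N(n)=\numreg$ and normalizing sequence $u_n=u(n)=n^{\beta}L(n)$ from \eqref{eq:ch4:def_u_n}. With these choices $N_n(t)=T(\floor{nt})/u(n)=T_n(t)$ is exactly the process in \eqref{eq:ch4:t_n_process}. The auxiliary process is $S_n=C_n$, which is non-negative and non-decreasing for each $n$ as a normalized partial sum of the block lengths $\ell(\block_k)$. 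Theorem \ref{th:ch4:convergence_process}(i) gives $C_n\convergence{\cadlaginf}S_{\beta}$, and the one-sided stable Lévy process $S_{\beta}$ is strictly increasing (its Lévy measure is infinite, so it has no intervals of constancy), non-negative, with independent increments, no fixed jumps, and $S_{\beta}(0)=0$; hence it satisfies every requirement placed on the limit. Finally Theorem \ref{th:ch4:convergence_process}(ii) says that $T_n=N_n$ is equivalent to $C_n^{(-1)}=S_n^{(-1)}$, which is the last hypothesis. The Lemma then delivers \eqref{clt}, the convergence $N_n\convergence{\cadlaginf}S_{\beta}^{(-1)}=M_{\beta}$, and the asymptotic independence of $N_n(1)$ and the left-hand side of \eqref{clt}. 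Since $M_{\beta}$ is a.s. continuous, $t=1$ is a.s. a continuity point, so $N_n(1)=\numreg/(n^{\beta}L(n))$ converges in distribution to $M_{\beta}(1)$, which is Mittag--Leffler with parameter $\beta$; this is the second assertion.

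For \eqref{clt_non_blocks} I would start from \eqref{eq: ch4: split_sum_equation}, which yields
\[
\frac{\sqrt{\numreg}}{\sigma}\left(\frac{S_n(f)}{\numreg}-\pi_{\atom}(f)\right)
=\frac{\sqrt{\numreg}}{\sigma}\left(\frac{1}{\numreg}\sum_{j=1}^{\numreg}f(\block_j)-\pi_{\atom}(f)\right)
+\frac{1}{\sigma}\,\frac{f(\block_0)+f(\block_{(n)})}{\sqrt{\numreg}}.
\]
The first summand converges to $\Normal(0,1)$ by \eqref{clt}, so by Slutsky's theorem it suffices to show the remainder is $o_{\P}(1)$. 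Since $\P_{\lambda}(\tau_{\atom}<\infty)=1$, the variable $f(\block_0)$ is a.s. finite while $\numreg\to\infty$ a.s. by recurrence, so $f(\block_0)/\sqrt{\numreg}\as 0$.

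The main obstacle is the last, incomplete block $f(\block_{(n)})$, and this is precisely where the extra hypothesis $\E[|f|(\block_1)^2]<+\infty$ is needed (the weaker control in Remark \ref{rm:ch4:negligible_blocks} is only at the scale $\numreg$, not $\sqrt{\numreg}$). I would bound $f(\block_{(n)})$ by the full regeneration block straddling time $n$,
\[
\bigl|f(\block_{(n)})\bigr|\le |f|(\block_{\numreg+1})\le \max_{1\le j\le \numreg+1}|f|(\block_j),
\]
and then invoke the elementary fact that, for i.i.d. non-negative variables with finite second moment, $\max_{1\le j\le m}|f|(\block_j)/\sqrt{m}\as 0$: a Borel--Cantelli argument gives $|f|(\block_m)/\sqrt{m}\as 0$, which upgrades to the running maximum since the maximum over the first finitely many terms is $O(1)$. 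Composing this deterministic limit with $\numreg\to\infty$ a.s. and using $\sqrt{\numreg+1}/\sqrt{\numreg}\to 1$ yields $f(\block_{(n)})/\sqrt{\numreg}\as 0$. Hence the remainder vanishes in probability, and Slutsky's theorem gives \eqref{clt_non_blocks}.
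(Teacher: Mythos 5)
Your proposal is correct and follows essentially the same route as the paper: it applies Lemma \ref{clt_general_iid} with $X_j=f(\block_j)$, $N(n)=\numreg$, $u_n=n^\beta L(n)$, $S_n=C_n$, using Theorem \ref{th:ch4:convergence_process} to verify the hypotheses and obtain \eqref{clt} together with the Mittag--Leffler limit and the asymptotic independence, and then derives \eqref{clt_non_blocks} from the decomposition \eqref{eq: ch4: split_sum_equation} with the same bound $|f(\block_{(n)})|\leq |f|(\block_{\numreg+1})$. The only cosmetic difference is that you re-prove the auxiliary facts (the Borel--Cantelli bound $|f|(\block_m)/\sqrt{m}\as 0$ and the random-index composition) that the paper instead cites from \cite{Athreya-2016} and \cite{Gut2013}.
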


The following corollary is a direct consequence of Theorem \ref{cltLemma}, equation
\eqref{eq:ch4:varianceEstimator} and Slutsky's theorem.

\begin{corollary}\label{cor:clt_studentized}
    Under the same hypothesis of Theorem \ref{cltLemma}, we have
    \begin{align*}
        \frac{\sqrt{T(n)}}{\hat{\sigma}_n}\left(\frac{1}{T(n)}\sum\limits_{j=1}^{T(n)}{f(\mathcal{B}_{j})}-\int f \,d\pi_{\atom}\right) & \convergence{d}\Normal(0,1),
    \end{align*}
    and if $\E [|f|(\block_1)^2]<+\infty$ also holds, then
    \begin{equation*}
        \frac{\sqrt{T(n)}}{\hat{\sigma}_n}\left(\frac{S_n(f)}{T(n)}-\int f \,d\pi_{\atom}\right)               \convergence{d}\Normal(0,1).
    \end{equation*}
\end{corollary}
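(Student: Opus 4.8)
The plan is to reduce both studentized statements to the non-studentized central limit theorems of Theorem \ref{cltLemma} by a single application of Slutsky's theorem, exactly as announced before the statement. First I would factor out the true standard deviation, writing the studentized statistic as the product of $\sigma/\hat{\sigma}_n$ and the statistic already treated in Theorem \ref{cltLemma}:
\begin{align*}
    \frac{\sqrt{\numreg}}{\hat{\sigma}_n}\left(\frac{1}{\numreg}\sum_{j=1}^{\numreg}{\fblock}-\int f\,d\pi_{\atom}\right)
    &= \frac{\sigma}{\hat{\sigma}_n}\cdot\frac{\sqrt{\numreg}}{\sigma}\left(\frac{1}{\numreg}\sum_{j=1}^{\numreg}{\fblock}-\int f\,d\pi_{\atom}\right).
\end{align*}
This decomposition is legitimate because $\sigma>0$ (implicit in the normalization by $\sigma$ in Theorem \ref{cltLemma}) and, as explained below, $\hat{\sigma}_n>0$ almost surely for all $n$ large enough.

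Next I would control the first factor. By \eqref{eq:ch4:varianceEstimator} we have $\hat{\sigma}_n^2\as\sigma^2$; since $\sigma^2>0$ the estimator is eventually positive almost surely, and the square root is continuous at $\sigma^2$, so the continuous mapping theorem gives $\hat{\sigma}_n\as\sigma$ and hence $\sigma/\hat{\sigma}_n\as 1$. As almost sure convergence implies convergence in probability, $\sigma/\hat{\sigma}_n\convergence{p}1$. The second factor converges in distribution to $\Normal(0,1)$ by \eqref{clt} of Theorem \ref{cltLemma}. Slutsky's theorem applied to the product of a sequence converging in probability to the constant $1$ and a sequence converging in distribution to $\Normal(0,1)$ then yields the first asserted convergence.

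The second assertion is obtained verbatim: under the extra moment condition $\E[|f|(\block_1)^2]<+\infty$ I would replace the block-average $\frac{1}{\numreg}\sum_{j=1}^{\numreg}\fblock$ by $S_n(f)/\numreg$ and invoke \eqref{clt_non_blocks} in place of \eqref{clt}, the factor $\sigma/\hat{\sigma}_n\convergence{p}1$ being unaffected by this change.

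As for the main obstacle, there is essentially none beyond bookkeeping, since all the substantive content (the central limit theorem itself and the strong consistency of $\hat{\sigma}_n^2$) is already in hand. The only point deserving a word of care is the passage from $\hat{\sigma}_n^2\as\sigma^2$ to $\hat{\sigma}_n\as\sigma$, which requires $\sigma^2>0$ so that the square root is continuous at the limiting value; this is guaranteed by the hypothesis $\sigma^2=\Var\fblock[1]$ being a genuine (nonzero) variance inherited from Theorem \ref{cltLemma}.
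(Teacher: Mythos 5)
Your proof is correct and follows exactly the route the paper intends: the paper itself dispatches this corollary in one line as a direct consequence of Theorem \ref{cltLemma}, equation \eqref{eq:ch4:varianceEstimator}, and Slutsky's theorem, which is precisely your factorization $\sigma/\hat{\sigma}_n$ times the normalized statistic. Your additional care about $\sigma^2>0$ and the continuity of the square root is a sound (if routine) filling-in of details the paper leaves implicit.
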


\section{The regenerative block-bootstrap algorithm (RBB)}\label{sec:ch4:rbb}

Let \(\data=(X_{0},..., X_{n})\) be observations drawn from a \(\beta\)-null recurrent
Markov chain \(\chain\) with an \textit{a priori} known accessible atom \(\atom\). As in
the previous section, let $f$ be a $\pi_{\atom}$-integrable function such that
$f\left(\block_1\right)$ has a finite second moment. Denote by $\sigma^2$ the variance of
$f\left(\block_1\right)$.

The Regenerative block-bootstrap (RBB) method, which we explore in this section, was
initially introduced in \cite{Bertail2006} for positive recurrent Markov chains. In their
Theorem 2.1, it was shown that, in the atomic case, the RBB distribution achieves a
uniform rate of convergence of order $O_p\left(n^{-1}\right)$ for both the studentized
and unstudentized sample mean, meaning that the sup-norm between the true distribution
and its bootstrap approximation is of order $O_p\left(n^{-1}\right)$.

In this section, we show that the method is also applicable in the $\beta$-null recurrent
case, although we have not been able to obtain a rate.

\begin{remark}

    Obtaining rates of convergence, for the bootstrap, typically depends on Edgeworth
    expansions \cite{HALL1990108}. These expansions can be derived, at least formally, by
    calculating cumulants using standard techniques. In the Markovian case, the validity
    of these expansions not only depends on the cumulants of $\fblock$ but also on the
    moments of \(\tau_{\atom}\) \cite{Malinovskii1987, BertailClemencon2004}. More
    precisely, up to our knowledge, these expansions have been obtained only when
    \(\E_{\atom}\tau_{\atom}^4\) is finite \cite[Theorem 5.1]{BertailClemencon2004}.
    Developing methods to obtain Edgeworth expansions for distributions with very few
    moments is an interesting research direction, but it would involve substantial
    theoretical developments that are beyond the scope of this work.

\end{remark}

Proposition 3.1 in \cite{BertailClemencon2004} shows that for positive recurrent chains,
in the nonstationary case (when the initial law $\lambda$ is not the invariant
probability measure), the first data block $\mathcal{B}_{0}$ induces a bias of order
$O(n^{-1})$, which cannot be estimated from a single realization $\data$ of the chain
starting from $\lambda$. The last block $\mathcal{B}_{(n)}$ (which is incomplete) induces
a first-order term in the bias too. This led the authors in \cite{Bertail2006} to only
consider statistics based on the regenerative data blocks
$\block_{1},\ldots,\block_{\numreg}$.

In the $\beta$-null recurrent case, the lack of finite first moment for the block sizes
suggests that considering the non-regenerative blocks will incur in an even worse bias,
hence, as in \cite{Bertail2006}, we will only consider statistics based on the
regenerative data blocks $\block_{1},\ldots,\block_{\numreg}$.

While our asymptotic results are specifically stated for integrals with respect to the
invariant measure, the algorithm can be applied to any statistic defined over the
regeneration blocks, as long as it has finite variance.

As customary in the bootstrap literature, $\P^{\ast}(\bullet)=\P(\bullet\mid \data)$
denotes the conditional probability given $\data$. We will write $Z^{\ast}_n
\xrightarrow{d^{\ast}}_p Z$ to indicate the weak converge in probability of the bootstrap
random variables $Z_n^\ast$ to $Z$, this is, for all $x\in\R$, $\P^{\ast}(Z_n^{\ast}\leq
x)\convergence{p}\P(Z\leq x)$. See pp. 2550 in \cite{CavaliereGeorgiev2020} for more
details.

In this section, our goal is to bootstrap a general statistic $G_n$ that converges to a
parameter $\theta$. We will typically prove asymptotic results for the case where
$G_n=\sum_{j=1}^{\numreg}{\fblock[i]}/{\numreg}$. Additionally, we assume the
availability of a block-based standardization, denoted as
$\;Std_{n}=Std(\block_{1},...,\block_{\numreg})\;$. The distribution of interest is
defined as $H_n(x)=\P(Std_{n}^{-1}(G_n -\theta) \leq x)$.

The RBB procedure is performed in four steps as follows:

\begin{enumerate}
    \item Count the number of visits\ $T_n(\atom)$ to the atom $\atom$ up to time $n$,
          and divide the observed sample path\ $\data=(X_{0},\ldots,X_{n})$ into
          $T_n(\atom)+1$ blocks,\ $\block_{0}$, $\block_{1},\ldots,$
          $\block_{T_n(\atom)-1},$ $\block_{T_n(\atom)\;}^{(n)}$, corresponding to the
          pieces of the sample path between consecutive visits to the atom $\atom$.\ Drop
          the first and last (non-regenerative) blocks. Denote by $\numreg$ the number of
          remaining blocks.

    \item Draw sequentially bootstrap data blocks $\block_{1,\numreg}^{\ast},..., $
          $\block_{k,\numreg}^{\ast}$ independently from the empirical distribution
          $F_{n}=\numreg^{-1}\sum_{j=1}^{\numreg}\delta_{\block _{j}} $ of the blocks
          $\{\block_{j}\}_{1\leq j\leq \numreg}$ conditioned on $\data$, until the length
          $\ell^{\ast}(k)=\sum_{j=1}^{k}\ell(\block_{j,\numreg}^{\ast})$ of the bootstrap
          data series is larger than $n$. Let $T^{\ast}_n(\atom)=\inf\{k\geq1,$
          $\ell^{\ast}(k)>n\}$ and \(\bootsnumreg=T^{\ast}_n(\atom)-1\).

    \item From the data blocks generated in step 2, reconstruct a pseudo-trajectory of
          size $\ell^{\ast}(\bootsnumreg)\;$by binding the blocks together, that is
          \begin{equation*}
              X^{\ast(n)}=\left(\block_{1,\numreg}^{\ast},...,\block_{\bootsnumreg,\numreg}^{\ast}\right).
          \end{equation*}
          Compute the \textit{RBB statistic}$\ G_{n}^{\ast}=G_{n}(X^{\ast(n)}).$

    \item If$\;Std_{n}=S(\block_{1},...,\block_{\numreg})\;$is\ an appropriate
          standardization of the original statistic $G_{n}$, compute
          $Std_{n}^{\ast}=S(\block_{1,\numreg}^{\ast\text{ }},...,\block
          _{\bootsnumreg,\numreg}^{\ast})$.
\end{enumerate}

The \textit{RBB distribution} is then given by
\begin{equation*}
    H_{RBB}(x)=\P^{\ast}\Bigl(Std_{n}^{\ast-1}\left(G_{n}^{\ast}-G_{n}\right)\leq x\Bigr).
\end{equation*}
One purpose of the next paragraphs is to show that if we choose a correct standardization $Std_n$, then we can obtain that $H_{RBB}(x)-H_n(x)\convergence{p}0$ uniformly in $x$.
Our main asymptotic result, in the case of integrals concerning the invariant measure, is
the following.

\begin{theorem}[Validity of the RBB]\label{th:ch3:convergence_rbb}
    Let $\chain$ be a $\beta$-null recurrent Markov chain with an accessible atom $\atom$, and let $f$
    be a $\pi_{\atom}$-integrable function such that $\E [f(\block_1)^2]<+\infty$.
    Define
    \begin{equation*}
        \hat{\sigma}^2_{\numreg} = \frac{1}{\numreg}\sum\limits_{j = 1}^{\numreg} {{{\left( {\fblock - \frac{1}{\numreg}\sum\limits_{i = 1}^{\numreg} {\fblock[i]} } \right)}^2}}\textnormal{ and }\hat{\mu}_{\numreg}=\frac{1}{{\numreg}}\sum\limits_{i = 1}^{\numreg} {\fblock[i]}.
    \end{equation*}
    Then we have,
    \begin{equation*}
        \frac{\sqrt {\bootsnumreg}}{{\hat{\sigma}_{\numreg}}} \left( \frac{1}{\bootsnumreg} {{\sum\limits_{j = 1}^{\bootsnumreg} {\left( {\fbootsblock - \hat{\mu}_{\numreg} } \right)} }} \right) \convergence{d^\ast}_p  \Normal\left( {0,1} \right).
    \end{equation*}
\end{theorem}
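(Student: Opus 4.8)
\emph{Strategy.} The plan is to transport, conditionally on the data $\data$ and in probability over the data, the argument behind Theorem \ref{cltLemma}. Conditionally on $\data$ the resampled pairs $\bigl(\ell(\block^{\ast}_{j,\numreg}),\fbootsblock\bigr)$ are i.i.d.\ draws from the empirical law $F_n=\numreg^{-1}\sum_{i=1}^{\numreg}\delta_{\block_i}$, so $\{\fbootsblock\}_{j\ge 1}$ are (conditionally) i.i.d.\ with mean $\hat\mu_{\numreg}$ and variance \emph{exactly} $\hat\sigma^2_{\numreg}$, while $\bootsnumreg=\inf\{k\ge 1:\sum_{j=1}^{k}\ell(\block^{\ast}_{j,\numreg})>n\}-1$ is a stopping time for the filtration generated by the resampled blocks. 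A first, clarifying remark is that no Slutsky step is needed for the denominator: $\hat\sigma_{\numreg}$ is precisely the conditional standard deviation of each $\fbootsblock$, so the statistic is an exactly studentized, randomly-indexed sum. The decisive structural feature—already the reason the positive recurrent RBB works—is that we self-normalize by the random count $\sqrt{\bootsnumreg}$ and never by a deterministic function of $n$; this is what permits a fixed $\Normal(0,1)$ limit even though, by the obstruction of \cite{Athreya1987,Knight1989}, the conditional law of the resampled length sums $\sum_{j}\ell(\block^{\ast}_{j,\numreg})$ does \emph{not} converge to a fixed distribution.

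\emph{Bootstrap CLT at a deterministic index.} First I would fix a deterministic sequence $m_n\to\infty$ with $m_n\asymp u(n)$ and prove the conditional central limit theorem for the row-wise i.i.d.\ triangular array $\{\fbootsblock-\hat\mu_{\numreg}\}_{1\le j\le m_n}$. Since $\E[f(\block_1)^2]<+\infty$, equations \eqref{eq:ch4:lln} and \eqref{eq:ch4:varianceEstimator} give $\hat\mu_{\numreg}\as\pi_{\atom}(f)$ and $\hat\sigma^2_{\numreg}\as\sigma^2$, and the conditional Lindeberg condition
\[ \frac{1}{\hat\sigma^2_{\numreg}}\,\E^{\ast}\!\left[\bigl(\fbootsblock[1,\numreg]-\hat\mu_{\numreg}\bigr)^2\,\indicator{\bigl|\fbootsblock[1,\numreg]-\hat\mu_{\numreg}\bigr|>\eps\,\hat\sigma_{\numreg}\sqrt{m_n}}\right]\convergence{p}0 \]
follows from the convergence of the empirical second moment (a Bickel--Freedman type verification). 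This yields $\hat\sigma_{\numreg}^{-1}m_n^{-1/2}\sum_{j=1}^{m_n}(\fbootsblock-\hat\mu_{\numreg})\convergence{d^\ast}_p\Normal(0,1)$, and, being a CLT for sums of (conditionally) i.i.d.\ summands with no dominating term, it upgrades to the functional (Donsker) statement and holds in the stable/mixing sense, i.e.\ asymptotically independently of any tight functional of the resampled block \emph{lengths}.

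\emph{Control of the random index.} Next I would analyse $\bootsnumreg$ through the renewal duality $\{\bootsnumreg\ge \floor{u(n)t}\}=\bigl\{n^{-1}\sum_{j=1}^{\floor{u(n)t}}\ell(\block^{\ast}_{j,\numreg})\le 1\bigr\}$. The resampled lengths are conditionally i.i.d.\ from the empirical law of $\{\ell(\block_i)\}_{1\le i\le\numreg}$, whose tail reproduces the regularly varying tail \eqref{tailCondition} of index $\beta$ as $n\to\infty$, so a truncation argument—separating the empirically large blocks (whose top order statistics, rescaled by $n$, encode the limiting stable Lévy intensity, cf.\ Theorem \ref{th:ch4:convergence_process}) from a uniformly negligible truncated remainder—shows that $\bootsnumreg\to\infty$ and that $\bootsnumreg/u(n)$ is conditionally tight, bounded away from $0$, and converges conditionally to a positive limit $\eta$ whose law may depend on $\data$. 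The point to emphasise is that, as the conditional variance of $\ell(\block^{\ast}_{1,\numreg})$ is of order $n^2/u(n)$, the fluctuations of $\bootsnumreg$ are of the same order $u(n)$ as its conditional mean, so $\eta$ is genuinely non-degenerate; the standard renewal/Anscombe control by a deterministic constant is unavailable.

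\emph{Assembling and the main obstacle.} Writing the target as $\sqrt{u(n)/\bootsnumreg}$ times the Donsker process of the second step evaluated at the random time $\bootsnumreg/u(n)$, the joint convergence of the bulk $f$-fluctuation process and the renewal count gives, conditionally, a limit of the form $\eta^{-1/2}W(\eta)$ with $W$ a standard Brownian motion; since $W$ is asymptotically independent of $\eta$, conditioning on $\eta$ gives $W(\eta)\mid\eta\sim\Normal(0,\eta)$, so $\eta^{-1/2}W(\eta)\sim\Normal(0,1)$ is a \emph{fixed} law that does not depend on the data. This is exactly the collapse effected in the proof of Lemma \ref{clt_general_iid}, now carried out conditionally, and it delivers $\convergence{d^\ast}_p\Normal(0,1)$. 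I expect the main obstacle to be precisely this asymptotic independence in the presence of the heavy-tailed index: because block lengths and $f$-values are dependent \emph{within} a block, the count $\bootsnumreg$ is correlated with the very sum it indexes, so the decoupling cannot be assumed but must be extracted from the mixing property of the finite-variance $f$-CLT (the few extreme-length blocks that drive $\eta$ contribute only finitely many finite-variance $f$-terms, negligible after $\sqrt{u(n)}$-normalization), and every convergence above has to be established in the conditional, in-probability sense rather than imported from Theorem \ref{th:ch4:convergence_process}, whose fixed stable limit is unavailable in the bootstrap world.
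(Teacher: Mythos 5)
Your overall architecture --- a conditional Lindeberg/Donsker step for the resampled $f$-sums, an analysis of the bootstrap renewal count $\bootsnumreg$, and a random-time-change collapse $\eta^{-1/2}W(\eta)\sim\Normal(0,1)$ upgraded to convergence in probability because the limit law is data-free --- is in spirit the same as the paper's, and your closing insight (self-normalization plus asymptotic independence yield a non-random limit, which is what permits the upgrade from weak to in-probability convergence) is exactly the one the paper exploits. But there is a genuine gap at the step you yourself flag as the main obstacle, and the resolution you sketch would fail. You require the conditional convergence of $\bootsnumreg/u(n)$, and its joint convergence with the Donsker process, to be ``established in the conditional, in-probability sense''. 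That is not achievable: conditionally on $\data$, the law of the resampled length sums, hence of $\bootsnumreg/u(n)$, is a functional of the top order statistics of $\{\ell(\block_i)\}_{i\leq\numreg}$, and by Theorem 1 of \cite{Knight1989} these order statistics (suitably rescaled) converge in distribution but \emph{never} in probability --- they keep fluctuating as new record-length blocks arrive. So the conditional law of the count converges only weakly along the data (the ``weakly-weakly'' sense of \cite{CavaliereGeorgiev2020}); no truncation argument can manufacture an in-probability limit $\eta$. Relatedly, your identification of the limit through ``the limiting stable L\'evy intensity'' is off: conditionally on the data the limit of the resampled length process is not stable but the random-coefficient compensated Poisson sum $S^{\ast}(t)=K\sum_{j}Z_j\left(\lambda^{\ast}_j(t)-t\right)+t$, whose jump sizes $Z_j$ are frozen by the data while the resampling randomness enters only through the Poisson processes.

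The paper closes precisely this gap with a device absent from your proposal: the Skorokhod--Dudley--Wichura representation. One passes to a probability space in which Knight's distributional convergence of the order statistics holds almost surely; in that space the conditional statements become honest a.s.\ ones --- the bootstrap length process converges to $S^{\ast}$ via the conditional invariance principle of \cite{Berkes2010} (Lemma \ref{lemma:ch3:convergence_s_n_star}), the count process $T^{\ast}_{n,\numreg}$ is shown equivalent to $S^{\ast(-1)}_{n,\numreg}$ (Lemma \ref{lemma:t_t_n_equivalence_bootstrap}), and the asymptotic independence you hoped to ``extract from mixing'' is instead delivered by the composition result, Lemma \ref{cadlagFiniteConvergenceInf}, which is exactly the substitute for the Anscombe-type control that is unavailable here. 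Transferring back to the original space then gives only weak convergence of $\P^{\ast}\left(U^{\ast}\leq x\right)$ to $\Phi(x)$, and the upgrade to convergence in probability is licensed solely by the constancy of $\Phi(x)$ --- the step you anticipated, but which becomes available only after the representation-space argument is in place. Without that device, or some substitute that makes the joint conditional limit rigorous, your proof does not go through.
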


This theorem yields that the bootstrap distribution of the standardized sum has
asymptotically the same distribution as the statistics
${\sum_{j=1}^{T(n)}f(\mathcal{B}_{j})}/{T(n)}$ estimating $\int f d\pi_{\atom}$. The
proof of this result is non-trivial and totally non-standard: it starts by constructing a
space, via Skorokhod-Dudley-Wichura Theorem (see pp. 1171 in \cite{Knight1989}), in which
we can get a.s. convergence of order statistics of the block lengths, as in
\cite{Knight1989}. Then, in that space we apply the CLT described in Lemma
\ref{clt_general_iid} to obtain the convergence in probability of the bootstrap quantity
$H_{RBB}$ to the CDF of a normal distribution, which implies convergence of the same
things in distribution in the original space. But since this bootstrap limit is
non-random (it does not depend on the data), we get in turn the weakly convergence in
probability. The regenerative block bootstrap is thus first-order correct. In particular,
this justifies the use of the quantiles of the bootstrap distribution (with or without
standardizing) to obtain confidence intervals for $\int f d\pi_{\atom}$.


\begin{remark}

    In the original formulation of the RBB for atomic and positive recurrent chains
    \cite[Theorem 2.1]{Bertail2006}, the estimator used was
    $G_n={\sum_{i=1}^{\numreg}{\fblock[i]}}/{n_{\atom}}$, where
    $n_{\atom}=\sum_{k=1}^{\numreg}{\ell(\block_k)}$. A key element in their proof is
    that $n_{\atom}$ is $a.s.$ equivalent to a multiple of $n$, however, in the
    null-recurrent scenario, this equivalence does not hold due to the lack of first
    moment for $\ell(\block_1)$. Therefore, we need to use the random normalization. On
    the other hand, Remarks \ref{rm:ch4:non_convergence_null_recurrent} and
    \ref{rm:ch4:negligible_blocks} rule out the use of
    ${\sum_{i=1}^{\numreg}{\fblock[i]}}/{n}$ in the null-recurrence case (it converges
    $a.s.$ to 0), and equation \eqref{eq:ch4:lln} suggests
    $\sum_{j=1}^{\numreg}{\fblock[i]}/{\numreg}$ as its natural replacement. It should be
    pointed out that using ${\sum_{i=1}^{\numreg}{\fblock[i]}}/{u(n)}$ (or $S_n(f)/u(n)$)
    is also not useful, because its limit distribution is a constant multiple of a
    Mittag-Leffler distribution, see \cite[Theorem 2.1]{Chen1999}.The random
    normalization seems unavoidable in the $\beta$-null recurrent scenario.

\end{remark}

\section{The regeneration-based bootstrap algorithm}\label{sec:ch4:regeneration_bb}

In this section, we adapt the \textit{Regeneration-based bootstrap} to the $\beta$-null
recurrent Markov chain scenario.

Similarly to Section \ref{sec:ch4:rbb}, consider observations
$\data=\left(X_{0},\ldots,X_{n}\right)$ drawn from a $\beta$-null recurrent Markov chain
$\chain$ that has an accessible atom $\atom$ known beforehand. Suppose that $f$ is a
function such $\pi_{\atom}(f)$ is finite and the second moment of
$f\left(\block_1\right)$ is also finite. Let $\sigma^2$ represent the variance of
$f\left(\block_1\right)$.

The algorithm we present in this section was introduced in
\cite{AthreyaFuh1992,Somnat-1993} for positive recurrent Markov chains with an accessible
known atom. Similarly to the RBB, it consists on dividing the chain into
$\block_1,\ldots,\block_{\numreg}$ regenerative blocks and then resampling blocks to form
the empirical distribution of $\block_1,\ldots,\block_{\numreg}$. The main difference
between the Regeneration-based bootstrap and the RBB is that in the former, the number of
bootstrapped blocks is $\numreg$, hence, non-random conditionally to $\data$, while in
the latter is random.

The full algorithm is as follows:

\begin{enumerate}
    \item Count the number of visits\ $T_n(\atom)$ to the atom $\atom$ up to time $n$,
          and divide the observed sample path\ $\data=(X_{0},\ldots,X_{n})$ into
          $T_n(\atom)+1$ blocks,\ $\block_{0}$, $\block_{1},\ldots,$
          $\block_{T_n(\atom)-1},$ $\block_{T_n(\atom)\;}^{(n)}$, corresponding to the
          pieces of the sample path between consecutive visits to the atom $\atom$.\ Drop
          the first and last (non-regenerative) blocks. Denote by $\numreg$ the number of
          remaining blocks.

    \item  Draw \(\numreg\) bootstrap data blocks $\mathcal{B}_{1,\numreg}^{\ast},..., $
          $\mathcal{B}_{\numreg,\numreg}^{\ast}$ independently from the empirical
          distribution $F_{n}=\numreg^{-1}\sum_{j=1}^{\numreg}\delta_{\block _{j}} $ of
          the blocks $\{\block_{j}\}_{1\leq j\leq \numreg}$ conditioned on $\data$.

    \item From the bootstrap data blocks generated at step 2, reconstruct a trajectory by
          binding the blocks together, getting the reconstructed sample path
          \begin{equation*}
              X^{\ast (n)}=(\mathcal{B}_{1,\numreg}^{\ast},...,\mathcal{B}_{\numreg,\numreg}^{\ast}).
          \end{equation*}
          Compute the statistic $\ G_{n}^{\ast}=G_{n}\left(X^{\ast(n)}\right).$
    \item If $Std_{n}=Std(\block_{1},\ldots,\block_{\numreg})$ is an appropriate
          standardization of the original statistic $G_{n}$, compute
          $Std_{n}^{\ast}=Std(\block_{1,\numreg}^{\ast\text{
          }},\ldots,\block_{\numreg,\numreg}^{\ast})$.
\end{enumerate}

As in the RBB case, the asymptotic result stated below shows the validity of this
bootstrap scheme when used in estimations of integrals with respect to the invariant
measure. In line with the conventions of bootstrap literature, we will write $Z^{\ast}_n
\xrightarrow{d^{\ast}}_{a.s.} Z$ to denote the weak convergence almost surely along the
data of the bootstrap random variables $Z_n^\ast$ towards $Z$. This means that, for every
$x\in\R$, $\P^{\ast}\left(Z_n^{\ast}\leq x\right)\convergence{a.s.}\P\left(Z\leq
x\right)$. For more details, see \cite[pp. 2250]{CavaliereGeorgiev2020}.

\begin{theorem}[Validity of the Regeneration based bootstrap]\label{th:ch3:regeneration_bootstrap} Under the same hypothesis of Theorem \ref{th:ch3:convergence_rbb}, we have
    \begin{equation*}
        \frac{\sqrt {\numreg}}{\hat{\sigma}_{\numreg}}\left(\frac{1}{\numreg}\sum\limits_{j = 1}^{\numreg} {\left( {\fbootsblock - \hat{\mu}_{\numreg}} \right)} \right)\convergence{d^\ast}_{a.s.}  \Normal\left( {0,1} \right).
    \end{equation*}
\end{theorem}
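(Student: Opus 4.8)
The plan is to exploit the feature that distinguishes this scheme from the RBB: the number of resampled blocks is fixed at $\numreg$ conditionally on $\data$, so no bootstrap stopping time appears and the Skorokhod--Dudley--Wichura construction needed for Theorem~\ref{th:ch3:convergence_rbb} can be dispensed with entirely. Conditionally on $\data$, the variables $f(\block^\ast_{1,\numreg}),\dots,f(\block^\ast_{\numreg,\numreg})$ are i.i.d.\ draws from the empirical law $\numreg^{-1}\sum_{j=1}^{\numreg}\delta_{f(\block_j)}$, with conditional mean $\hat\mu_{\numreg}$ and conditional variance $\hat\sigma^2_{\numreg}$. The quantity in the statement is therefore nothing but the studentized, conditionally centred sum of a fixed number $\numreg$ of conditionally i.i.d.\ terms, i.e.\ the ordinary Efron bootstrap of the sample mean of $f(\block_1),\dots,f(\block_{\numreg})$.

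First I would record, from the strong Markov property, that under $\P_\atom$ (and, for the complete blocks $j\ge1$, under any initial law reaching $\atom$ almost surely) the sequence $\{f(\block_j)\}_{j\ge1}$ is i.i.d.\ with mean $\pi_\atom(f)$ and finite variance $\sigma^2>0$, and that along the \emph{deterministic} index $m$ one has $\hat\mu_m\as\pi_\atom(f)$ and $\hat\sigma^2_m\as\sigma^2$ by \eqref{eq:ch4:lln} and \eqref{eq:ch4:varianceEstimator}. For such a sequence, the classical almost-sure bootstrap central limit theorem for the sample mean (Bickel--Freedman, Singh) yields, for almost every realization of $\{f(\block_j)\}$ and every $x\in\R$,
\begin{equation*}
    \Psi_m(x):=\P^\ast\!\left(\frac{1}{\hat\sigma_m\sqrt m}\sum_{j=1}^{m}\bigl(f(\block^\ast_j)-\hat\mu_m\bigr)\le x\right)\longrightarrow \Phi(x),\qquad m\to\infty,
\end{equation*}
where $\Phi$ is the standard normal distribution function and $\block^\ast_j$ are drawn from the empirical law of the first $m$ blocks. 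If a self-contained argument is preferred, the same conclusion follows from the Lindeberg--Feller theorem applied to the conditional triangular array: the conditional Lindeberg ratio equals $\hat\sigma_m^{-2}\,m^{-1}\sum_{i=1}^{m}(f(\block_i)-\hat\mu_m)^2\,\indicator{|f(\block_i)-\hat\mu_m|>\eps\sqrt m\,\hat\sigma_m}$, which tends to $0$ almost surely because $f(\block_1)$ has a finite second moment and $\hat\sigma_m^2\to\sigma^2>0$.

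The final step is to pass from the deterministic index $m$ to the random index $\numreg$. Recurrence gives $\numreg\to\infty$ almost surely, so on the intersection of the full-probability event on which $\Psi_m(x)\to\Phi(x)$ for every $x$ with the event $\{\numreg\to\infty\}$ one obtains $\Psi_{\numreg}(x)\to\Phi(x)$ for every $x$; this is exactly the asserted $\convergence{d^\ast}_{a.s.}$ convergence to $\Normal(0,1)$. The conceptual point—and the reason this theorem is considerably easier than Theorem~\ref{th:ch3:convergence_rbb}—is that the random normalization $\numreg$ causes no trouble here: because the bootstrap limit $\Phi$ is \emph{non-random}, substituting a random index tending to infinity into an almost surely convergent sequence of deterministic-limit quantities is immediate, whereas the RBB must control a genuinely random bootstrap index through the Skorokhod topology and Lemma~\ref{clt_general_iid}. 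I expect the only place requiring real care to be the almost-sure Lindeberg verification (equivalently, the invocation of the almost-sure i.i.d.\ bootstrap CLT), which is the sole step that uses the finite second-moment hypothesis.
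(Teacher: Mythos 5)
Your proposal is correct, and its central reduction is the same one the paper makes: conditionally on $\data$, this scheme is exactly Efron's bootstrap of the i.i.d.\ blocks $\fblock[1],\ldots,\fblock[\numreg]$, so the only genuine issue is the random index $\numreg$, which is neutralized by $\numreg\to\infty$ a.s.\ together with the non-randomness of the limit law (and, as you note, no Skorokhod--Dudley--Wichura construction is needed). The technical packaging differs, though. The paper does not invoke an almost-sure bootstrap CLT as a black box; it reruns the Bickel--Freedman argument itself: working in the class $\Gamma_2$ of distributions with finite second moment under the Mallows metric $d_2$, it shows $d_2\left(F_{\numreg},F\right)\to 0$ along almost all sample paths and $d_2\left(F^{(\numreg)},N_\sigma\right)\to 0$, then combines the contraction inequality $d_2\left(G^{(m)},H^{(m)}\right)\leq d_2\left(G,H\right)$ with the triangle inequality to get $d_2\left(F_{\numreg}^{(\numreg)},N_\sigma\right)\to 0$; studentization is then a one-line Slutsky step via \eqref{eq:ch4:varianceEstimator}, since $d_2$-convergence carries second moments along. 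Your route---the a.s.\ bootstrap CLT of Bickel--Freedman/Singh at deterministic sample sizes $m$, or the self-contained Lindeberg--Feller check on the conditional triangular array, followed by the substitution $m=\numreg$---is equally valid: the substitution works pointwise in $\omega$ on the intersection of the two almost-sure events, precisely because the blocks extracted from $\data$ are the first $\numreg$ terms of the infinite i.i.d.\ sequence $\left\{\fblock\right\}_{j\geq 1}$. What each buys: your Lindeberg version is more elementary and absorbs the studentization automatically (given the data, $\hat{\sigma}_{\numreg}$ is a constant and the array already has unit conditional variance), whereas the paper's $d_2$ argument additionally delivers convergence of the bootstrap second moment and stays closest to the classical reference it extends to the random-index setting.
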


\begin{remark}In its original formulation for the positive recurrent case, the estimator used was
    ${S_n(f)}/{n}$,
    however, by Remark \ref{rm:ch4:non_convergence_null_recurrent}, it can not be used in the null
    recurrent case.
\end{remark}

\section{Simulations}\label{sec:ch4:simulations}

In order to empirically compare the two bootstrap methodologies described in this paper,
we devote this section to simulation examples. The code for all the experiments is
available at \url{https://github.com/carlosds731/boostrap_markov}

As a model for the experiments, we will consider the simple symmetric random walk in
$\Z$, that is
\begin{equation}\label{eq:ch4:ssrw_z}
    X_t=\begin{cases}
        0                             & ,\quad t=0     \\
        \sum\limits_{k = 1}^t {{Y_k}} & ,\quad t\geq 1
    \end{cases}
\end{equation}
with $P\left( {{Y_i} = 1} \right) = P\left( {{Y_i} =  - 1} \right) = {1}/{2}$. In this random walk, the
state $0$ is an atom and the invariant measure is $\pi_0\left(i\right)\equiv 1$ (see pp.1143 in
\cite{Athreya2015}). Consider the function \(f(k)=\frac{1}{k^2}\) if \(k\neq 0\)
and \(f(0)=0\), then
\begin{equation*}
    \int {f\left( x \right)d\pi_0 \left( x \right)}  = 2\sum\limits_{k = 1}^{ + \infty } {\frac{1}{{{k^2}}}}
    = \frac{{{\pi^2}}}{3}.
\end{equation*}

Our parameter of interest will be \(\int {f\left( x \right)d\pi_0 \left( x \right)}\)
which we will estimate with \(G_n={\sum_{i=1}^{\numreg}{\fblock[i]}}/{\numreg}\). The
bootstrap version of this statistic will be denoted by \(G_{RBB,n}^{\ast}\) in the RBB
case and by \(G_{RgB,n}^{\ast}\) in the regeneration based scenario. The standardized
versions of these statistics are defined as follows:
\begin{align*}
    L_n:=\frac{\sqrt{T(n)}}{\hat{\sigma}_n}\left(G_n-\int f \,d\pi_{\atom}\right)                ,  \\
    L^{\ast}_{RBB,n}:=\frac{\sqrt{\bootsnumreg}}{\hat{\sigma}_n}\left(G^{\ast}_{RBB,n}-G_n\right) , \\
    L^{\ast}_{RgB,n}:=\frac{\sqrt{\numreg}}{\hat{\sigma}_n}\left(G^{\ast}_{RgB,n}-G_n\right).
\end{align*}

By Corollary \ref{cor:clt_studentized} and Theorems \ref{th:ch3:convergence_rbb} and
\ref{th:ch3:regeneration_bootstrap} we have that \(L_n\convergence{d}\Normal(0,1)\),
\(L^{\ast}_{RBB,n}\convergence{d^{\ast}}_p\Normal(0,1)\) and
\(L^{\ast}_{RgB,n}\convergence{d^{\ast}}_{a.s.}\Normal(0,1)\).

\subsection{Comparisons with the true distribution}

To see the finite sample performance of both bootstrap methods, and compare its accuracy,
for different values of \(n\) we have simulated a realization of the chain of length
\(n\) and then applied both bootstrap methods \(10^4\) times, obtaining that many samples
of \(L^{\ast}_{RBB,n}\) and \(L^{\ast}_{RgB,n}\). We have then computed the empirical
cumulative distribution function of these statistics and compared with the CDF of
\(L_n\).

The results of these simulations, presented in Figure
\ref{fig:comparison_distribution_functions}, show that as \(n\) increases, the
distributions of both \(L^{\ast}_{RBB,n}\) and \(L^{\ast}_{RgB,n}\) approximate the true
distribution of \(L_n\). Regarding the accuracy, the experiment gives empirical evidence,
that, as in the positive recurrent case, the RBB provides a more accurate approximation
of the true distribution than the regeneration based bootstrap or the asymptotic normal
distribution.

\begin{remark}
    For each \(n\), the true distribution of \(L_n\) is unknown. To obtain a reliable
    approximation of its cumulative distribution function, we simulated \(10^5\) independent
    realizations of \(\chain\) of length \(n\). We then used these samples to compute the
    empirical cumulative distribution function of \(L_n\).
\end{remark}

\begin{figure}[!ht]
    \centering
    \begin{subfigure}[b]{0.47\textwidth}
        \centering
        \includegraphics[width=\textwidth, height=0.2\textheight]{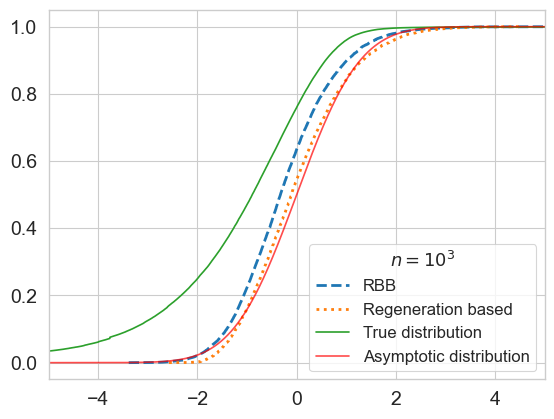}
    \end{subfigure}
    \hfill
    \begin{subfigure}[b]{0.47\textwidth}
        \centering
        \includegraphics[width=\textwidth, height=0.2\textheight]{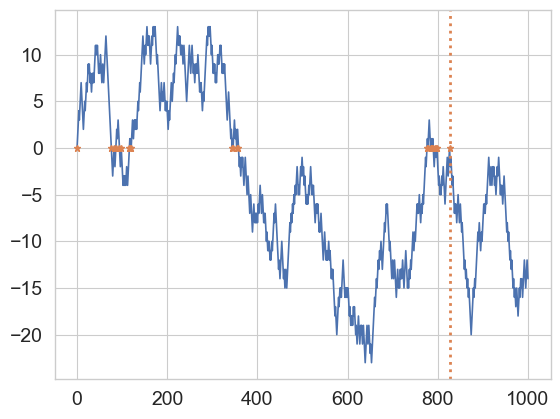}
    \end{subfigure}

    \begin{subfigure}[b]{0.47\textwidth}
        \centering
        \includegraphics[width=\textwidth, height=0.2\textheight]{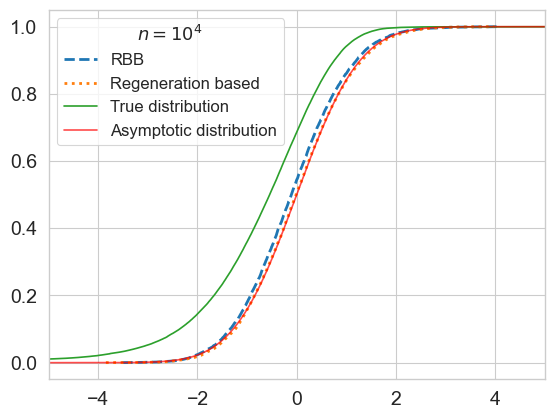}
    \end{subfigure}
    \hfill
    \begin{subfigure}[b]{0.47\textwidth}
        \centering
        \includegraphics[width=\textwidth, height=0.2\textheight]{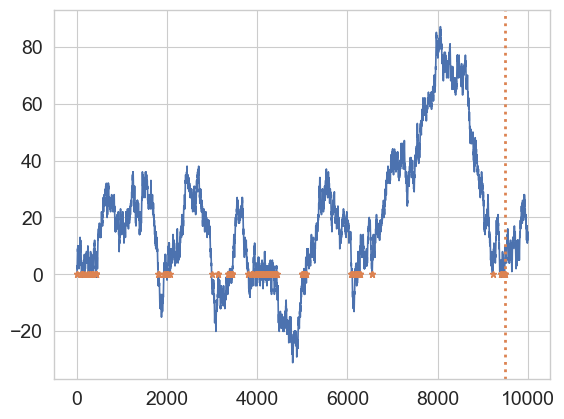}
    \end{subfigure}

    \begin{subfigure}[b]{0.47\textwidth}
        \centering
        \includegraphics[width=\textwidth, height=0.2\textheight]{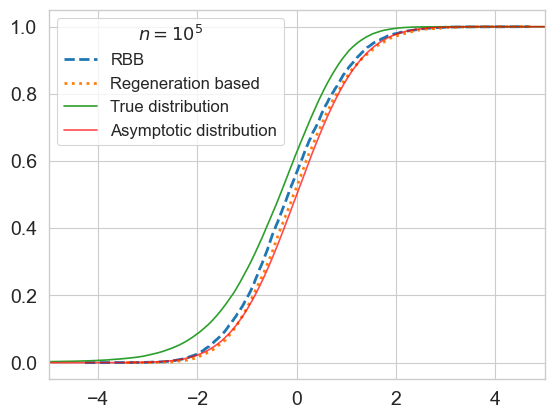}
    \end{subfigure}
    \hfill
    \begin{subfigure}[b]{0.47\textwidth}
        \centering
        \includegraphics[width=\textwidth, height=0.2\textheight]{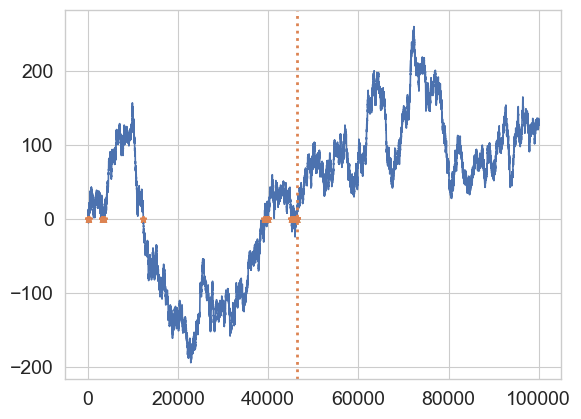}
    \end{subfigure}

    \begin{subfigure}[b]{0.47\textwidth}
        \centering
        \includegraphics[width=\textwidth, height=0.2\textheight]{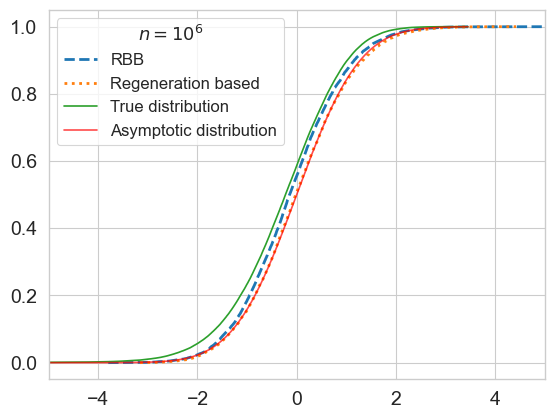}
    \end{subfigure}
    \hfill
    \begin{subfigure}[b]{0.47\textwidth}
        \centering
        \includegraphics[width=\textwidth, height=0.2\textheight]{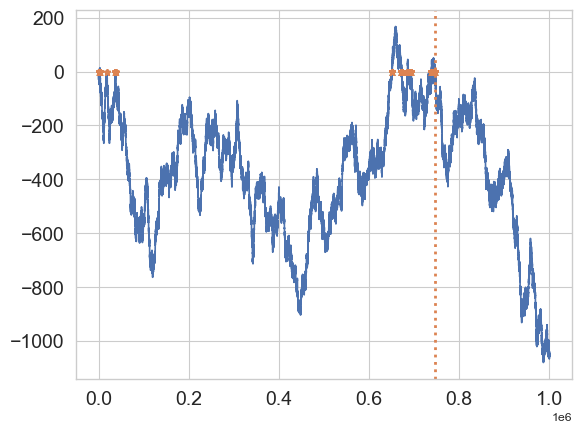}
    \end{subfigure}

    \caption{
        Distribution functions of \(L^{\ast}_{RBB,n}\) and \(L^{\ast}_{RgB,n}\), the
        true distribution of \(L_n\) and the asymptotic distribution (standard normal) for different values of \(n\) (left column) and the
        realization of \(\chain\) from where the samples of \(L^{\ast}_{RBB,n}\) and
        \(L^{\ast}_{RgB,n}\) were obtained (right column), the
        orange stars mark the regeneration times, while the
        orange dotted lines indicate the end of the last complete
        block. }\label{fig:comparison_distribution_functions}
\end{figure}

\subsection{Coverage probability}

The bootstrap methods' first-order correctness established in this paper allows us to use
the quantiles of \(L^{\ast}_{RBB,n}\) and \(L^{\ast}_{RgB,n}\) to construct confidence
intervals for \(\int f \,d\pi_{\atom}\). Let \(q^{\ast}_{RBB}(\alpha)\) and
\(q^{\ast}_{RgB}(\alpha)\) represent the \(\alpha\)-quantiles of \(L^{\ast}_{RBB,n}\) and
\(L^{\ast}_{RgB,n}\) respectively. The bootstrap confidence intervals are then given by:
\begin{align*}
    I^{\ast}_{RBB,n} & =\left[G_n-\frac{\hat{\sigma}_n}{\sqrt{\numreg}}q^{\ast}_{RBB}(1-\alpha/2), G_n-\frac{\hat{\sigma}_n}{\sqrt{\numreg}}q^{\ast}_{RBB}(\alpha/2)\right], \\
    I^{\ast}_{RgB,n} & =\left[G_n-\frac{\hat{\sigma}_n}{\sqrt{\numreg}}q^{\ast}_{RgB}(1-\alpha/2), G_n-\frac{\hat{\sigma}_n}{\sqrt{\numreg}}q^{\ast}_{RgB}(\alpha/2)\right].
\end{align*}

Figure \ref{fig:ch4:coverage_probability} shows the coverage probabilities of
\(I^{\ast}_{RBB,n}\) and \(I^{\ast}_{RgB,n}\) for \(\alpha=0.05\) for different values of
\(n\) while figure \ref{fig:ch4:average_interval_size} shows the average length of these
confidence intervals. For comparisons, we have also included in figures
\ref{fig:ch4:coverage_probability} and \ref{fig:ch4:average_interval_size} the coverage
probabilities and average interval length of the confidence intervals obtained when we
use the normal approximation.

As expected, as \(n\) gets larger, the coverage probability of the confidence intervals
increases, approaching the desired level (\(0.95\)), while the average length decreases.
This experiment reinforces the idea that the RBB provides a better approximation than the
regeneration based bootstrap, as it produces confidence intervals with higher coverage
probability and very similar length. In comparison with the asymptotic distribution, the
RBB generates confidence intervals with narrower lengths and similar coverage
probabilities. This could be explained by noticing that the asymptotic distribution,
being symmetric, does not take into account the possible asymmetry of the underlying
distribution.

\begin{remark}

    To determine the coverage probability for a given \(n\), we simulated \(10^4\)
    independent realizations of \(\chain\) with length \(n\). We then applied both
    bootstrap methods \(10^4\) times, generating the same number of samples for
    \(L^{\ast}_{RBB,n}\) and \(L^{\ast}_{RgB,n}\), which were subsequently used to
    compute their bootstrap quantiles.

\end{remark}

\begin{figure}[!ht]
    \centering
    \begin{subfigure}[b]{0.48\textwidth}
        \centering
        \includegraphics[width=\textwidth]{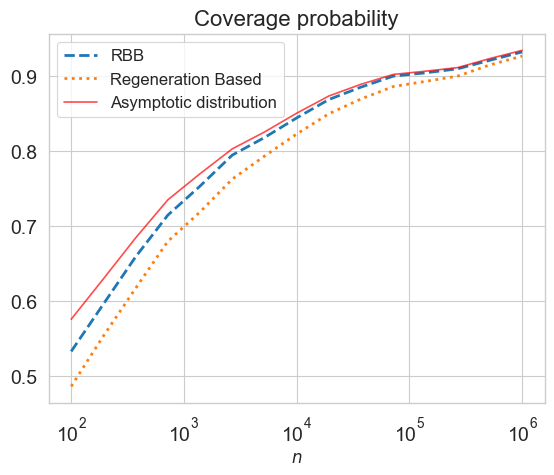}
        \caption{Coverage probabilities}
        \label{fig:ch4:coverage_probability}
    \end{subfigure}
    \hfill
    \begin{subfigure}[b]{0.48\textwidth}
        \centering
        \includegraphics[width=\textwidth]{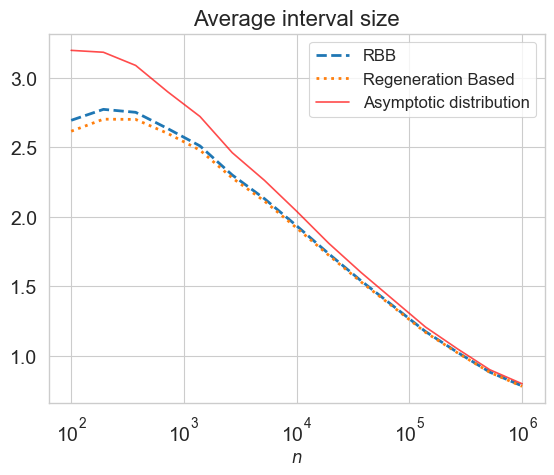}
        \caption{Average interval length}
        \label{fig:ch4:average_interval_size}
    \end{subfigure}
    \caption{
        Coverage probabilities and average interval length of the \(I^{\ast}_{RBB,n}\)
        and \(I^{\ast}_{RgB,n}\) at \(95\%\) confidence level. The x-axis is in logarithmic
        scale.
    }
\end{figure}

\section{Conclusions and perspectives}\label{sec:conclusions}

In this work we have proved the first order validity of the Regenerative block-bootstrap
and Regeneration based bootstrap for atomic $\beta$-null recurrent Markov chains. Up to
our knowledge these are the first bootstrap method whose validity has been established
for these type of non-stationary Markov chains.

In terms of extending the methods to non-atomic chains, it is possible to apply the
Nummelin splitting technique \cite{Nummelin1978,Nummelin1984}, following the approach
described in \cite[Section 3]{Bertail2006}. This construction involves ``extending'' the
chain to make it atomic, then applying the bootstrap to the extended chain. However, to
establish the validity of these bootstrap procedures for non-atomic $\beta$-null
recurrent chains, several additional steps are required. First, we need to derive a
uniform rate of convergence on a small set for the transition kernel estimator (which has
been done at specific points by \cite{Tjostheim-2001} but not uniformly). We also require
new exponential inequalities to obtain rate of convergence. Finally, it must be shown
that the same type of coupling used in \cite{Bertail2006} still holds in this case. This
will be the subject of further investigations.

\section{Proofs}\label{sec:ch4:proofs}

\subsection{Proof of Lemma \ref{clt_general_iid}}

For the proof of Lemma \ref{clt_general_iid} we need the following result, which appears
as part A.3 of Theorem A.1 in \cite{Tjostheim-2001}.

\begin{lemma}\label{cadlagFiniteConvergenceInf}
    Let \(A_n\) and \(B_n\) be a pair of stochastic processes
    which are càdlàg, where \(A_n\) is non-negative and non-decreasing. Let \(B\) denote
    a Brownian motion defined for \(t\in\mathbb{R}\) and let \(A\) denote a strictly increasing
    non-negative process with independent increments, \(A(0)\equiv 0\) and with no fixed
    jumps. Assume that \(B_n\convergence{\cadlaginf} B\) and \(A_n\convergence{\cadlaginf} A\).
    Then, \(A_n^{(-1)}\convergence{\cadlaginf}A^{(-1)}\) and
    \begin{equation*}
        \left( {A_n^{( - 1)}( t ),\frac{{{B_n}\circ A_n^{( - 1)}( t )}}{{\sqrt {A_n^{( - 1)}( t )} }}} \right) \convergence{d} ({A^{(- 1)}}(t),Z)\quad \forall t \in ( {0,1} ],
    \end{equation*}
    where \(Z\) is standard normal random variable independent of \(A^{(-1)}(t)\).
\end{lemma}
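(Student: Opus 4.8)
The plan is to deduce the whole statement from the $J_1$-continuity of two classical functionals — \emph{inversion} and \emph{composition (random time change)} — and then to identify the limit by Brownian scaling. Write $A_n^{(-1)}(t)=\inf\{s:A_n(s)>t\}$, and likewise for $A^{(-1)}$. The structural fact that drives everything is that, since $A$ is a.s.\ strictly increasing with $A(0)\equiv 0$ and no fixed jump at $0$, its inverse $A^{(-1)}$ is a.s.\ \emph{continuous} and non-decreasing, with $A^{(-1)}(t)>0$ for every $t>0$. The possible jumps of the subordinator-type process $A$ produce flat stretches of $A^{(-1)}$, not discontinuities.

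First I would prove $A_n^{(-1)}\convergence{\cadlaginf}A^{(-1)}$. The map sending a non-negative non-decreasing càdlàg function to its right-continuous generalized inverse is continuous at every strictly increasing limit; since the limit $A^{(-1)}$ is continuous, $J_1$-convergence to it is in fact locally uniform, so the only point needing care (the jumps of $A$) is harmless. Applying the continuous mapping theorem to $A_n\convergence{\cadlaginf}A$ then gives $A_n^{(-1)}\convergence{\cadlaginf}A^{(-1)}$; this is exactly the mechanism already used in Theorem \ref{th:ch4:convergence_process}(ii), and it yields the first assertion.

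For the joint statement I would combine this with a random time change. Starting from joint convergence $(B_n,A_n)\convergence{\cadlaginf}(B,A)$ in which the limits $B$ and $A$ are \emph{independent}, the continuity of inversion upgrades it to $(B_n,A_n^{(-1)})\convergence{\cadlaginf}(B,A^{(-1)})$. Because $A^{(-1)}$ is continuous and $B$ has continuous paths, the composition map $(x,r)\mapsto x\circ r$ is continuous at $(B,A^{(-1)})$, so the random-change-of-time theorem (cf.\ \cite{Billingsley1968}) gives $B_n\circ A_n^{(-1)}\convergence{\cadlaginf}B\circ A^{(-1)}$, jointly with $A_n^{(-1)}\convergence{\cadlaginf}A^{(-1)}$. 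Since both limit processes are continuous at the fixed $t\in(0,1]$ and $A^{(-1)}(t)>0$ a.s., evaluation at $t$ followed by division by $\sqrt{\,\cdot\,}$ is an a.s.\ continuous operation there, whence
\[
\left(A_n^{(-1)}(t),\ \frac{B_n\circ A_n^{(-1)}(t)}{\sqrt{A_n^{(-1)}(t)}}\right)\convergence{d}\left(A^{(-1)}(t),\ \frac{B\bigl(A^{(-1)}(t)\bigr)}{\sqrt{A^{(-1)}(t)}}\right).
\]

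It remains to identify the right-hand side, and here lies the main obstacle. Set $\tau=A^{(-1)}(t)$; as a measurable functional of $A$ alone it is independent of $B$, so conditioning on $\tau=s$ and using Brownian scaling gives $B(s)/\sqrt{s}\sim\Normal(0,1)$ for every $s>0$; the conditional law not depending on $s$, the variable $Z:=B(\tau)/\sqrt{\tau}$ is standard normal \emph{and} independent of $\tau$, which is precisely the claim. The genuine difficulty is therefore not the functional-continuity steps but securing the input used above: only the marginals $B_n\convergence{\cadlaginf}B$ and $A_n\convergence{\cadlaginf}A$ are immediate, whereas the argument needs \emph{joint} convergence together with independence of the limits $B$ and $A$. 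Establishing that independence — which is exactly the asymptotic-independence phenomenon between the Gaussian fluctuations $B_n$ and the clock $A_n$, and where the independent-increments structure of the Brownian limit does the real work — is the crux of the proof.
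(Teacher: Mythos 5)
Your functional-analytic skeleton is fine — the continuity of the inversion map at strictly increasing limits, the continuity of \(A^{(-1)}\) (jumps of \(A\) become flat stretches, and strict increase rules out jumps of the inverse), the random time change, and the scaling identification of \(Z:=B(\tau)/\sqrt{\tau}\) as standard normal independent of \(\tau=A^{(-1)}(t)\) \emph{given} that \(B\) and \(A\) are independent. But the step you defer at the end is not a detail to be "secured"; it is the entire content of the lemma, and your proposal leaves it unproved. Note first that the paper itself offers no proof to compare against: the lemma is imported verbatim as part A.3 of Theorem A.1 of Karlsen and Tjøstheim \cite{Tjostheim-2001}, so your attempt must stand on its own. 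Judged so, it reduces the statement to the unproven claim that \((B_n,A_n)\) converges jointly with \emph{independent} limits — a claim that is not among the hypotheses and, crucially, cannot be extracted from them by any continuity argument. Indeed, the marginal hypotheses alone do not imply the conclusion. Take \(B_n\equiv B\) and \(A_n\equiv A\) constant in \(n\), with \(A\) built from \(B\) itself: let \(T_1=F^{-1}(\Phi(B(1)))\) with \(F(x)=1-e^{-x}\), generate the remaining interarrival times from increments of \(B\) on \([1,\infty)\) so that \(N\) is a unit-rate Poisson process in law, and set \(A(s)=s+N(s)\). Then \(A\) is strictly increasing, non-negative, has independent increments, no fixed jumps, \(A(0)=0\), and all convergence hypotheses hold trivially; yet \(A^{(-1)}(1)=\min(T_1,1)\), and on the event \(\{T_1\geq 1\}=\{B(1)\geq\Phi^{-1}(1-e^{-1})\}\), of probability \(e^{-1}\), the normalized quantity equals \(B(1)\) conditioned to exceed a positive threshold — neither standard normal nor independent of \(A^{(-1)}(1)\).

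So the asymptotic independence must be supplied by structure external to the lemma as transcribed. In \cite{Tjostheim-2001}, and in every application in this paper (Lemma \ref{clt_general_iid}, Theorem \ref{cltLemma}), \(B_n\) and \(A_n\) are partial-sum processes built from the \emph{same} regeneration blocks — e.g.\ \(f(\block_j)\) and \(\ell(\block_j)\) — which are dependent within each block, so independence of the limits is genuinely a theorem, not an assumption one can graft on. What makes it true there is the classical asymptotic independence of partial sums attracted, under different normalizations (\(\sqrt{n}\) versus \(v(n)\sim n^{1/\beta}L_1(n)\)), to a Gaussian law and to a \(\beta\)-stable subordinator with \(\beta<1\): the Kasahara-type joint functional convergence \cite{Kasahara1984} yields \((B_n,A_n)\convergence{\cadlaginf}(B,A)\) with \(B\) and \(A\) independent despite arbitrary within-pair dependence. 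Your proof becomes complete only once this joint convergence with independent components is either proved or explicitly added to the hypotheses; without it, the chain of continuous-mapping steps has nothing of the required form to act on.
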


To prove Lemma \ref{clt_general_iid}, let \(W_{k}=\sigma^{-1}( X_{k} - \mu)\), then
\(\{W_{k}\}_{k=1}^{\infty}\) is an i.i.d. sequence with \(\E{W_{k}}=0\) and
\(\Var{}{W_{k}}=1\) for all \(k\). Define the following continuous time process for
\(t\geq 0\)
\begin{equation}
    Q_n(t)=\frac{1}{\sqrt{n}}\sum_{k=1}^{\floor{nt}}{W_{k}}.
\end{equation}

By Theorem 23 and Example 24 in \cite{Pollard1984}, \(Q_n\convergence{\cadlaginf}B\) and
given that \(u_n\) is an unbounded increasing sequence, we also have that \(Q_{u_n}\)
converges weakly to \(B\) in \(\cadlaginf\).

The conditions imposed to the process \(N_n\) allow us to apply Lemma
\ref{cadlagFiniteConvergenceInf} with \(A_n=S_n\) and \(B_n=Q_{u_n}\). Taking into
account that \(N_n\) is equivalent to \(S_{n}^{(-1)}\) we obtain that for all \(t>0\)
\begin{equation}\label{intermedianConvergence}
    \frac{Q_{u_n}\left(N_n(t)\right)}{\sqrt{N_n(t)}}\convergence{d}{\Normal\left(0,1\right)}.
\end{equation}
Using that \(N\left(\floor{nt}\right)=u_n N_n(t)\), we get
\begin{equation}\label{composition}
    {Q_{{u_n}}}({N_n}(t)) = \frac{{\sigma^{ - 1}}}{{\sqrt {{u_n}} }} {\sum\limits_{j = 1}^{N\left( {\left\lfloor {nt} \right\rfloor } \right)} {\left( {{X_j} - \mu} \right)} } ,
\end{equation}
and Lemma \ref{clt_general_iid} follows after plugging (\ref{composition}) into (\ref{intermedianConvergence}) and taking \(t=1\).

\subsection{Proof of Corollary \ref{cor: clt_original}}

We assume, at first, that \(\theta\) is bounded, that is, there exists a constant \(K\)
such that \(0<\theta<K\) with probability 1. Without loss of generality, assume the
\(u_n\) are integers. Define the process
\begin{equation*}
    N_n(t)=\begin{cases}
        \frac{t N\left(n\right)}{u_n} & , \text{ if }\frac{N(n)}{u_n}<1 \\
        t\theta                       & , \text{ otherwise }
    \end{cases}.
\end{equation*}

As stated in pp. 147 of \cite{Billingsley1968}, this process converges to the process
\(t\theta\) and trivially satisfies the conditions of Lemma \ref{clt_general_iid} (using
\(S_n(t)=\frac{t}{\theta}\), \(S^{-1}_n(t)=t\theta\)).

The case when \(K\) is unbounded can be treated by following the same argument as in pp.
148 of \cite{Billingsley1968}.

\subsection{Proof of Theorem \ref{cltLemma}}\label{subsec:ch3:proof_clt_chain}

Recall from Section \ref{sec:ch4:beta_null_recurrent} that, by the Strong Markov
Property, the sequence \(\left \{ f(\mathcal{B}_{j}) \right \}_{j=1}^{+\infty}\) is
i.i.d. with mean \(\int f d\pi_{\atom}\) and variance \(\sigma^2\). Consider the
processes $T_n(t)$ and $C_n$ defined in \eqref{eq:ch4:t_n_process}
\begin{equation*}
    T_n(t)=\frac{T\left(\left\lfloor nt \right\rfloor\right)}{u\left( n \right)}\quad,\quad C_n(t)=\frac{1}{v\left( n \right)}\sum_{k=0}^{\left\lfloor nt \right\rfloor}{\ell\left(\block_k\right)}.
\end{equation*}

By Theorem \ref{th:ch4:convergence_process}, we can apply Lemma \ref{clt_general_iid}
with \(X_{i}=f(\mathcal{B}_{i})\), \(\mu=\int f d\pi_{\atom}\), \(N(n)=T(n)\) and
\(u_n=n^\beta L(n)\), which completes the proof of \eqref{clt}. In order to prove
\eqref{clt_non_blocks}, denote by \(W_n\) the left-hand side of \eqref{clt}, then
\begin{equation*}
    \frac{\sqrt{T(n)}}{\sigma}\left(\frac{S_n(f)}{T(n)}-\int f \,d\pi_{\atom}\right)=W_n+\frac{\fblock[0]}{\sqrt{\numreg}}+\frac{\fblock[(n)]}{\sqrt{\numreg}},
\end{equation*}
hence, \eqref{clt_non_blocks} will follow from \eqref{clt} if we show that ${\fblock[0]}/{\sqrt{\numreg}}$ and ${\fblock[(n)]}/{\sqrt{\numreg}}$ converge to 0 at least in
probability.

The random variable $\fblock[0]$ is almost surely bounded and $\numreg$ converges almost
surely to $+\infty$, therefore ${\fblock[0]}/{\sqrt{\numreg}}$ converges to 0 almost
surely. For the other term, first notice that for all $n\in\N$ we have
\begin{align*}
    \frac{|\fblock[(n)]|}{\sqrt{\numreg}}\leq \frac{|f|\left(\block_{(n)}\right)}{\sqrt{\numreg}} \leq \frac{|f|\left(\block_{T(n)+1}\right)}{\sqrt{\numreg}}.
\end{align*}

The random variables \(\{ |f|(\block_j)\}_{j\geq 1}\) are i.i.d. with finite second
moment, therefore, by Lemma 1 in \cite{Athreya-2016}, \(|f|(\block_n)/\sqrt{n}\)
converges to 0 a.s. Since \(\numreg\) converges to \(+\infty\) almost surely, Theorem
6.8.1 in \cite{Gut2013} implies that ${|f|(\block_{T(n)+1})}/{\sqrt{\numreg}}$ converges
to 0 with probability 1, which concludes the proof of Theorem \ref{cltLemma}.

\subsection{Proof of Theorem \ref{th:ch3:convergence_rbb}}

Assume we have observed the chain until time \(n\), i.e., \(\data=X_0,X_1,\ldots,X_n\),
and we have extracted the \(\numreg\) regeneration blocks:
\(\block_1,\ldots,\block_{\numreg}\).

Now we start to sequentially bootstrap data blocks
\(\bootsblock_{1,\numreg},\ldots,\bootsblock_{k,\numreg}\) independently from the
empirical distribution $F_{\numreg}=\numreg^{-1}\sum_{j=1}^{\numreg}\delta_{\block_{j}}$
of the blocks $\{\block_{j}\}_{1\leq j\leq \numreg}$, conditioned on $\data$, until the
length $\ell^{\ast}(k)=\sum_{j=1}^{k}\ell\left(\bootsblock_{1,\numreg}\right)$ of the
bootstrap data series is larger than $n$.

For each \(m\), define
\begin{align}
    T^{\ast}\left(m,\numreg\right)   & =\max\left\{k:\sum_{j=1}^{k}{\ell\left(\fbootsblock[j,\numreg]\right)}\leq m\right\},                                                           \\
    U^{\ast}\left( m,\numreg \right) & =\frac{\sqrt{m}}{\hat{\sigma}_{\numreg}}\left(\frac{1}{m}{\sum\limits_{j=1}^{m}{\left( \fbootsblock[j,\numreg]-\mu_{\numreg} \right)}} \right).
\end{align}

Theorem \ref{th:ch3:convergence_rbb} will be proved if we show that, for all $x\in\R$ it
holds that
\begin{equation}\label{eq:ch3:bootstrap_convergence_in_proba}
    \P^{\ast}\Biggl( U^{\ast}\Bigl(\bootsnumreg,\numreg\Bigr)\leq x \Biggr)\convergence{p}\Phi(x),
\end{equation}
where \(\Phi\) is the cumulative distribution function of a standard normal random
variable and $\P^{\ast}(\bullet)=\P(\bullet\mid \data)$
denotes the conditional probability given $\data$.

Given that we will bootstrap \(\bootsnumreg\) terms, which is a random quantity
conditionally to the data, we will use Lemma \ref{cadlagFiniteConvergenceInf} to prove
\eqref{eq:ch3:bootstrap_convergence_in_proba}. In order to do this we need, conditionally
to the data:
\begin{enumerate}
    \item Find a process \(S^{\ast}_{n,\numreg}(t)\) that is non-negative, non-decreasing
          that converges in \(\cadlaginf\) to a process \(S^{\ast}\) that is
          non-negative, strictly increasing, has independent increments, no fixed jumps
          and \(S^{\ast}\left(0\right)\equiv 0\).
    \item Show that \(T^{\ast}_{n,\numreg}(t)={T^{\ast}\left( \left\lfloor nt
          \right\rfloor \right)}/{\numreg}={T^{\ast}\left( \left\lfloor nt \right\rfloor,
          \numreg \right)}/{\numreg}\) is equivalent in \(\cadlaginf\) to
          \(S_{n,\numreg}^{\ast(-1)}\).
    \item Find a process \(Q_{n,\numreg}^{\ast}(t)\) that converges in \(\cadlaginf\) to
          a Brownian motion when \(n\) goes to \(+\infty\). This process should satisfy,
          for some \(t>0\)
          \begin{equation}\label{eq:ch3:required_equivalance}
              U^{\ast}\Bigl(\bootsnumreg,\numreg\Bigr) = \frac{Q^{\ast}_{n,\numreg}\circ T^{\ast}_{n,\numreg}(t)}{\sqrt{T_{n,\numreg}^{\ast}(t)}}.
          \end{equation}
\end{enumerate}

A natural choice for \(Q_{n,\numreg}^{\ast}\), which satisfies
\eqref{eq:ch3:required_equivalance} for \(t=1\), is
\begin{equation}\label{eq:ch3:expression_q_star}
    Q^{\ast}_{n,\numreg}(t)=\frac{\sqrt{\numreg}}{\hat{\sigma}_{\numreg}}\left( \frac{1}{\numreg}{\sum\limits_{j=1}^{\left\lfloor \numreg t \right\rfloor}{\left( f( {\mathcal{B}_{j,\numreg}^{\ast}} ) - \mu_{\numreg} \right)}}\right).
\end{equation}

Take \(S_{n,\numreg}^{\ast}(t)\) as
\begin{equation}
    S_{n}^{\ast}(t) = \frac{1}{{{v^{\ast}}(\numreg)}}\sum\limits_{i = 1}^{\left\lfloor {\numreg t} \right\rfloor } {\ell\left( \bootsblock_{i,\numreg} \right)},
\end{equation}
where \({v^{\ast}}(\numreg)=\sum_{i=0}^{\numreg}{\ell( {\mathcal{B}_i})}\).

Following the notation of \cite{Knight1989}, let \(Y_i={\ell( {\mathcal{B}_i} )}\) and
let \(Y_{1,n}\geq Y_{2,n}\geq \dots \geq Y_{n,n}\) be the order statistics of the sizes
of the first \(n\) blocks, and take \(Z_{k,n}={Y_{k,n}}/{v(n)}\) where \(v(n)\) is as in
\eqref{eq:ch4:def_v_n}. By Theorem 1 in \cite{Knight1989},
\begin{equation}\label{eq:convergence_distribution}
    {Z^{\left( n \right)}} = \left( {{Z_{1,n}},{Z_{2,n}}, \ldots ,{Z_{n,n}},0,0, \ldots} \right)\convergence{d} \left( {{Z_1},{Z_2}, \ldots} \right) = Z,
\end{equation}
where \({Z_k} = {\left( {{E_1} +  \cdots  + {E_k}} \right)^{ - \frac{1}{\beta }}}\) and
\(E_i\) is a sequence of i.i.d. of exponential random variables with mean 1.
By Skorokhod-Dudley-Wichura Theorem (see pp. 1171 in \cite{Knight1989} and pp. 476 in
\cite{Berkes2010}) we can choose a probability space such that, without changing the
distribution of the left-hand side of \eqref{eq:convergence_distribution},
\begin{equation}\label{eq:ch3:almost_sure_representation}
    {Z^{\left( n \right)}} \convergence{a.s.} Z.
\end{equation}

The following Lemma shows that in that space, conditionally to the data, the process
\(S^{\ast}_{n,\numreg}\) converges in \(\cadlaginf\).

\begin{lemma}\label{lemma:ch3:convergence_s_n_star} Suppose that \eqref{eq:ch3:almost_sure_representation} holds. Let \(\lambda^{\ast}_{j}(t)\) be independent Poisson processes with
    parameter \(1\), and \(K\) a positive constant. Define
    \begin{equation*}
        R^{\ast}(t)=\sum_{j=1}^{+\infty}{{Z_j}\left(\lambda^{\ast}_{j}(t)-t\right)} \quad\textnormal{and}\quad S^{\ast}(t)=KR^{\ast}(t)+t.
    \end{equation*}
    Then, \({\numreg}/{u(n)}\) converges almost surely to a positive random variable
    and
    \begin{equation}\label{eq:ch3:convergence_s_star_process}
        S^{\ast}_{n,\numreg}\convergence{\cadlaginf}S^{\ast}\quad\textnormal{and}\quad S^{\ast(-1)}_{n,\numreg}\convergence{\cadlaginf}S^{\ast(-1)}
    \end{equation}
    almost surely along the data. Moreover, the process \(S^{\ast}\)
    is non-negative, strictly increasing, continuous, with independent increments and
    \(S^{\ast}\left(0\right)\equiv 0\).
\end{lemma}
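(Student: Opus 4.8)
The plan is to carry out the whole argument inside the coupled probability space furnished by \eqref{eq:ch3:almost_sure_representation}, in which the rescaled order statistics of the block lengths converge almost surely, and to show that the LePage--Knight series representation of the limiting subordinator is exactly what the bootstrap resampling reproduces. Fix once and for all a data realization in the full-measure event on which $Z^{(n)}\to Z$ in \eqref{eq:convergence_distribution}. Write $Y_i=\ell(\block_i)$, let $Z_{k,m}=Y_{k,m}/v(m)$ be the normalized $k$-th largest length among the first $m$ blocks, set $\Sigma=\sum_{j\geq 1}Z_j$, and let $M_k(t)$ denote the number of the first $\lfloor\numreg t\rfloor$ bootstrap draws equal to the block carrying the $k$-th largest length among $\block_1,\dots,\block_{\numreg}$. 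Since $\beta\in(0,1)$ one has $0<\Sigma<\infty$ almost surely, the exact identity $S^{\ast}_{n,\numreg}(t)=v^{\ast}(\numreg)^{-1}\sum_{k=1}^{\numreg}Y_{k,\numreg}M_k(t)$ holds by construction, and the constant will turn out to be $K=\Sigma^{-1}$.

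First I would settle the convergence of $\numreg/u(n)$. Because $v^{\ast}(m)=\sum_{i=0}^{m}Y_i=\tau_{\atom}(m+1)$, the definition of $\numreg$ gives the sandwich $v^{\ast}(\numreg)\leq n<v^{\ast}(\numreg+1)$. As in \cite{Knight1989}, the coupling upgrades the stable limit theorem to the almost sure statement $v^{\ast}(m)/v(m)\to\Sigma$; evaluating it along the almost surely divergent random indices $\numreg$ and $\numreg+1$ and using $v(\numreg+1)/v(\numreg)\to 1$ (regular variation of $v$), both ends of the sandwich are asymptotic to $\Sigma\,v(\numreg)$, so $n\sim\Sigma\,v(\numreg)$. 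Inverting through $u$, which is regularly varying of index $\beta$, and using $u(v(\numreg))=\numreg$ for $\numreg$ large, I obtain $\numreg=u(v(\numreg))\sim u(n/\Sigma)\sim\Sigma^{-\beta}u(n)$, the last step applying regular variation with the fixed ratio $\Sigma^{-1}$ (legitimate because the data is frozen). Hence $\numreg/u(n)\to\Sigma^{-\beta}>0$ almost surely.

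The core step is the conditional convergence $S^{\ast}_{n,\numreg}\convergence{\cadlaginf}S^{\ast}$, which I would obtain by truncation. For fixed $J$, the $J$ largest atoms of $F_{\numreg}$ are asymptotically distinct, and the attached multinomial counts $(M_1,\dots,M_J)$ converge, conditionally on $\data$, to independent rate-$1$ Poisson processes $(\lambda^{\ast}_1,\dots,\lambda^{\ast}_J)$ by Poissonization (each cell is hit with probability $1/\numreg$ among $\approx\numreg t$ draws). Combined with the almost sure limits $Y_{k,\numreg}/v^{\ast}(\numreg)=Z_{k,\numreg}\,v(\numreg)/v^{\ast}(\numreg)\to Z_k/\Sigma=KZ_k$, this shows that the truncated process $v^{\ast}(\numreg)^{-1}\sum_{k\leq J}Y_{k,\numreg}M_k$ converges in $\cadlaginf$ to $K\sum_{k\leq J}Z_k\lambda^{\ast}_k$. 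The remainder $v^{\ast}(\numreg)^{-1}\sum_{k>J}Y_{k,\numreg}M_k(t)$ is non-decreasing in $t$, so its supremum over any $[0,T]$ is attained at $T$; its conditional mean equals $(\lfloor\numreg T\rfloor/\numreg)\,v^{\ast}(\numreg)^{-1}\sum_{k>J}Y_{k,\numreg}\to T\,\Sigma^{-1}\sum_{k>J}Z_k$, which vanishes as $J\to\infty$ since $\Sigma<\infty$, so a Markov inequality makes it uniformly negligible, while $K\sum_{k>J}Z_k\to 0$ controls the matching tail of the limit. A converging-together argument (e.g.\ Theorem 3.2 in \cite{Billingsley1968}, applied on each $[0,T]$) then delivers $S^{\ast}_{n,\numreg}\convergence{\cadlaginf}S^{\ast}$ with $S^{\ast}(t)=K\sum_{j}Z_j\lambda^{\ast}_j(t)=KR^{\ast}(t)+t$, the drift $t$ arising as the limit of the conditional mean $\E^{\ast}S^{\ast}_{n,\numreg}(t)$.

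Finally, $S^{\ast}$ is the subordinator with L\'evy measure $\sum_j\delta_{KZ_j}$: it is non-negative, has independent increments, satisfies $S^{\ast}(0)\equiv 0$, and is stochastically continuous with no fixed jumps; moreover it is strictly increasing because the total arrival rate $\sum_j 1$ is infinite (so jumps are dense) while $\sum_j KZ_j=K\Sigma=1<\infty$ keeps the process finite. These are exactly the properties required of the limit in Lemma \ref{cadlagFiniteConvergenceInf}, so the continuity of the inversion map at $S^{\ast}$ established there gives $S^{\ast(-1)}_{n,\numreg}\convergence{\cadlaginf}S^{\ast(-1)}$ almost surely along the data. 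I expect the main obstacle to be the core step: making the emergence of the LePage--Knight series from a data-dependent, heavy-tailed resampling distribution rigorous in the Skorokhod topology, and in particular verifying that the truncation is uniform enough to promote the finite-dimensional Poisson limits to full process convergence holding for almost every data realization.
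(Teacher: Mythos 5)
Your proof is correct in structure, but on both claims it takes a genuinely different route from the paper's. For the convergence of $\numreg/u(n)$, the paper argues via the sandwich $S_{u(n)}^{(-1)}(1)-1/u(n)\leq T(\lfloor v(u(n))\rfloor)/u(n)\leq S_{u(n)}^{(-1)}(1)+1/u(n)$ (following pp.~1141 of \cite{Athreya2015}) after establishing $S_n^{(-1)}\convergence{a.s.}S^{-1}$ by the continuous mapping theorem; you instead use the renewal sandwich $v^{\ast}(\numreg)\leq n<v^{\ast}(\numreg+1)$ plus regular-variation inversion of $u$. Both are sound (and rest on the same LePage/Knight upgrade of the stable limit to almost-sure convergence of the normalized block sums, which genuinely requires \cite{LePage1981} or Knight's argument, not just coordinatewise convergence of the order statistics); yours has the advantage of identifying the limit explicitly as $\Sigma^{-\beta}$. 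For the process convergence, the paper does not re-derive the conditional invariance principle at all: it invokes Corollary 1.2 of \cite{Berkes2010} (valid in the coupled space) to get $W^{\ast}_{n}\convergence{\cadlag_{[0,C]}}R^{\ast}$, extends from compacts to $\cadlaginf$ via Lemma 1.3.ii of \cite{Kasahara1984}, and then uses the algebraic identity $S^{\ast}_{n,\numreg}(t)=\bigl(v(\numreg)/v^{\ast}(\numreg)\bigr)W^{\ast}_{\numreg,\numreg}(t)+\lfloor\numreg t\rfloor/\numreg$ with $v(\numreg)/v^{\ast}(\numreg)\to K$. Your truncation--Poissonization argument (exact decomposition over the order statistics, multinomial-to-Poisson limits for the top $J$ cells, Markov/monotonicity control of the tail, converging-together) is essentially a self-contained proof of that cited corollary. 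What it buys is transparency and the explicit identification $K=\Sigma^{-1}$, which also makes the consistency check $K\sum_j Z_j\lambda_j^{\ast}(t)=KR^{\ast}(t)+t$ immediate; what it costs is that the steps you yourself flag as the main obstacle --- promoting the finite-dimensional Poisson limits to process-level convergence in the Skorokhod topology, uniformly enough in the truncation, for almost every data realization --- are precisely the technical content the paper outsources to \cite{Berkes2010}, so a fully rigorous write-up would largely reproduce that paper's proof. A small point in your favor: you verify exactly the hypotheses needed by Lemma \ref{cadlagFiniteConvergenceInf} (non-negativity, strict increase via the infinite total jump rate, independent increments, no fixed jumps, $S^{\ast}(0)\equiv 0$), correctly treating $S^{\ast}$ as a pure-jump process that is continuous in probability rather than pathwise continuous.
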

\begin{proof}
    When \eqref{eq:ch3:almost_sure_representation} holds, by Theorem 1 and Remark 1.3
    in \cite{LePage1981},
    \begin{equation*}
        \frac{1}{v(n)}\sum_{j=1}^{n}{\ell\left(\block_j\right)}\convergence{a.s.}\sum_{j=1}^{+\infty}{Z_j}.
    \end{equation*}

    The length of the first block, $\ell\left(\block_0\right)$, is finite with
    probability $1$ and does not depend on $n$, hence
    ${\ell\left(\block_0\right)}/{v(n)}$ converges almost surely to 0. This implies that
    \begin{equation}\label{eq:ch3:a_s_convergence_sum}
        \frac{1}{v(n)}\sum_{j=0}^{n}{\ell\left(\block_j\right)}\convergence{a.s.}\sum_{j=1}^{+\infty}{Z_j}.
    \end{equation}

    In \eqref{eq:ch4:def_v_n}, we defined \(v(z)\) as the inverse of \(u(z)=z^\beta
    L(z)\), then, by Proposition 1.5.15 in \cite{Bingham1987}, \(v(z)\sim z^{1/\beta}
    L_1(z)\) where \(L_1\) is a slowly varying function, hence, \(v(n)/v(\lfloor nt
    \rfloor)\to t^{-1/\beta}\), and we have that
    \begin{equation}\label{eq:ch3:a_s_convergence_sum_t}
        \frac{1}{v(n)}\sum_{j=0}^{\left\lfloor nt \right\rfloor}{\ell\left(\block_j\right)}\convergence{a.s.}t^{\frac{1}{\beta}}\sum_{j=1}^{+\infty}{Z_j}\quad\forall t>0.
    \end{equation}

    For each \(t>0\), let
    \begin{equation*}
        S_n(t)= \frac{1}{v(n)}\sum_{j=0}^{\left\lfloor nt \right\rfloor}{\ell\left(\block_j\right)}\; ,\;S_n^{(-1)}(t) = \inf\{ x>0: S_n( x )>t\}\; ,\;S(t)=t^{{1}/{\beta}}\sum_{j=1}^{+\infty}{Z_j},
    \end{equation*}
    and define the three processes as \(0\) on \(t=0\). By \eqref{eq:ch3:a_s_convergence_sum_t} and the
    Continuous Mapping Theorem, \(S_n^{(-1)}\convergence{a.s.}S^{-1}\).

    Similar to what is described on page 1141 in \cite{Athreya2015}, suppose that $y$ is
    such that $y<S_n^{(-1)}(1)$. Then, since $S_n(y)<1$, it follows that
    $\sum_{j=0}^{\lfloor ny \rfloor} \ell(\block_j) < v(n)$. Consequently, we have
    $T(\lfloor v(n) \rfloor) \geq \lfloor ny \rfloor > ny - 1$. This in turn implies that
    ${T(\lfloor v(n) \rfloor)}/{n} \geq y - {1}/{n} \geq S_n^{(-1)}(1) - {1}/{n}$ for all
    \(n\). Similarly, but taking $y>S_n^{(-1)}$, we show that ${T(\lfloor v(n)
    \rfloor)}/{n} \leq S_n^{(-1)}(1) + {1}/{n}$ for all $n$. Then,
    \begin{equation}\label{eq:ch3:t_n_bound_a_s}
        S_{u(n)}^{{(-1)}}\left(1\right)-\frac{1}{u\left( n \right)}\leq \frac{T\left( \left\lfloor v\left( u\left( n \right) \right) \right\rfloor \right)}{u\left( n \right)}\leq S_{u(n)}^{(-1)}\left(1\right)+\frac{1}{u\left( n \right)}.
    \end{equation}

    The first part of the lemma now follows from \eqref{eq:ch3:t_n_bound_a_s}, the
    convergence of \(S^{(-1)}_{u(n)}\left(1\right)\) to \(S^{-1}\left(1\right)\) and the
    fact that \(u\left(v(n)\right)=n\) for \(n\) big enough.

    To show \eqref{eq:ch3:convergence_s_star_process}, consider the following process,
    which was studied in \cite{Berkes2010},
    \begin{equation*}
        Z^{\ast}_{m,n}(t)=\frac{1}{v\left( n \right)}\sum\limits_{j=1}^{\left\lfloor mt \right\rfloor}\left( \ell\left( \bootsblock_{j,n} \right) - \frac{1}{n}{\sum_{i=1}^{n}{\ell\left( \block_i \right)}}  \right).
    \end{equation*}

    By Corollary 1.2 in \cite{Berkes2010} (and its proof\footnote{In \cite{Berkes2010},
    they standardize by \(T_n=\max\limits_{1\leq k\leq n} l\left(\block_k\right)\) but
    from the proof is clear that the result remains valid if we standardize by \(v(n)\)
    ($b_n$ in their notation).}), we see that when
    \eqref{eq:ch3:almost_sure_representation} holds, for any \(m_n\) such that
    \({m_n}/{n}\to c\), conditionally to the data, the process $Z^{\ast}_{m_n,n}$
    converges weakly in $\cadlagunit$ to $R^{\ast}\left(ct\right)$. Let \(C>1\), on
    $[0,C]$ define the process
    \begin{equation*}
        W^{\ast}_{n}(t)=\frac{1}{v\left( n \right)}\sum\limits_{j=1}^{\left\lfloor nt \right\rfloor}\left( \ell\left( \bootsblock_{j,n} \right) - \frac{1}{n}{\sum_{i=1}^{n}{\ell\left( \block_i \right)}}\right).
    \end{equation*}
    Notice that $W^{\ast}_{n}(t)=Z^{\ast}_{nC,n}\left({t}/{C}\right)$,
    hence, \(W^{\ast}_{n}\convergence{\cadlag_{[0,C]}}R^{\ast}\) as \(n\to +\infty\).
    Because this convergence holds for arbitrary \(C>0\), by Lemma 1.3.ii in \cite{Kasahara1984}
    we have that $W^{\ast}_{n}\convergence{\cadlaginf}R^{\ast}$, and therefore,
    $W^{\ast}_{\numreg}\convergence{\cadlaginf}R^{\ast}$.

    The process \(S^{\ast}_{n,\numreg}\) can be written as
    \begin{equation}\label{eq: s_n_star_process}
        S_{n,\numreg}^{\ast}(t)=\frac{v(\numreg)}{v^{\ast}(\numreg)}W_{\numreg,\numreg}^{\ast}(t)+\frac{\left\lfloor \numreg t \right\rfloor}{\numreg}.
    \end{equation}

    Notice that
    \begin{equation*}
        \frac{v(\numreg)}{v^{\ast}(\numreg)}=\left( \frac{1}{v\left(\numreg \right)}\sum_{j=0}^{\numreg}{\ell\left(\block_j\right)} \right)^{-1},
    \end{equation*}
    then, conditionally to the data, it converges to a positive constant \(K\) by equation
    \eqref{eq:ch3:a_s_convergence_sum}. Equation
    \eqref{eq:ch3:convergence_s_star_process} now follows from the convergence of
    $W^{\ast}_{\numreg,\numreg}$ and \eqref{eq: s_n_star_process}. The continuity of
    \(S^{\ast}\) was shown in pp. 466 of \cite{Berkes2010}, and the rest of the
    properties are directly deduced from the form of \(R^{\ast}\).
\end{proof}

The next Lemma handles the equivalence of \(T^{\ast}_{n,\numreg}\) and
\(S_{n,\numreg}^{\ast(-1)}\) in \(\cadlaginf\).

\begin{lemma}\label{lemma:t_t_n_equivalence_bootstrap} Under the same hypothesis of
    Lemma \ref{lemma:ch3:convergence_s_n_star}, the processes \(T^{\ast}_{n,\numreg}\) and
    \(S_{n,\numreg}^{\ast(-1)}\) are equivalent in \(\cadlaginf\).
\end{lemma}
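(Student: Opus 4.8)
The plan is to read both $T^{\ast}_{n,\numreg}$ and $S^{\ast(-1)}_{n,\numreg}$ off the single object governing the bootstrap renewal, namely the cumulative bootstrap block length $\ell^{\ast}(k)=\sum_{j=1}^{k}\ell\left(\bootsblock_{j,\numreg}\right)$, and then to transport to the constructed space of Lemma \ref{lemma:ch3:convergence_s_n_star} the duality argument that already yields the non-bootstrap equivalence $T_n\sim C_n^{(-1)}$ in Theorem \ref{th:ch4:convergence_process}. Since $k\mapsto\ell^{\ast}(k)$ is integer-valued and non-decreasing, for every $s\ge 0$ one has $\inf\{t\ge0:S^{\ast}_{n,\numreg}(t)>s\}=\numreg^{-1}\min\{k:\ell^{\ast}(k)>s\,v^{\ast}(\numreg)\}$, whence, writing $T^{\ast}(\cdot,\numreg)$ for the bootstrap counting function,
\begin{equation*}
    S^{\ast(-1)}_{n,\numreg}(s)=\frac{1}{\numreg}\Bigl(T^{\ast}\bigl(\floor{s\,v^{\ast}(\numreg)},\numreg\bigr)+1\Bigr).
\end{equation*}
Comparing with $T^{\ast}_{n,\numreg}(t)=\numreg^{-1}T^{\ast}(\floor{nt},\numreg)$, the two processes differ only through (i) the additive term $1/\numreg$, and (ii) the replacement of the time horizon $n$ by the block-spanned horizon $v^{\ast}(\numreg)=\sum_{i=0}^{\numreg}\ell(\block_i)=\tau_{\atom}(\numreg+1)$.

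First I would dispose of (i). By Lemma \ref{lemma:ch3:convergence_s_n_star} we have $\numreg\to\infty$ almost surely and $S^{\ast}_{n,\numreg}\convergence{\cadlaginf}S^{\ast}$ with $S^{\ast}$ strictly increasing and continuous, so the inverse map is continuous at $S^{\ast}$ and $S^{\ast(-1)}_{n,\numreg}\convergence{\cadlaginf}S^{\ast(-1)}$, while the shift $1/\numreg$ is asymptotically null in $\cadlaginf$. Next, mirroring the sandwich \eqref{eq:ch3:t_n_bound_a_s}, for $y<S^{\ast(-1)}_{n,\numreg}(1)$ the inequality $\ell^{\ast}(\floor{\numreg y})\le v^{\ast}(\numreg)$ forces $T^{\ast}(\floor{v^{\ast}(\numreg)},\numreg)\ge\floor{\numreg y}$, and symmetrically for $y>S^{\ast(-1)}_{n,\numreg}(1)$, giving
\begin{equation*}
    S^{\ast(-1)}_{n,\numreg}(1)-\frac{1}{\numreg}\le\frac{T^{\ast}\bigl(\floor{v^{\ast}(\numreg)},\numreg\bigr)}{\numreg}\le S^{\ast(-1)}_{n,\numreg}(1)+\frac{1}{\numreg}.
\end{equation*}
Running this at every $t$ shows that the bootstrap counting process evaluated at the horizon $v^{\ast}(\numreg)$ is equivalent, conditionally on the data and along the representation \eqref{eq:ch3:almost_sure_representation}, to $S^{\ast(-1)}_{n,\numreg}$.

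The hard part is (ii): passing from the horizon $v^{\ast}(\numreg)$ to the horizon $n$ that actually defines $T^{\ast}_{n,\numreg}$. The discrepancy is carried entirely by the dropped last (incomplete) block, of length $n-v^{\ast}(\numreg)=\ell(\block_{(n)})$. In the positive recurrent setting of \cite{Bertail2006} this length is $O_{\P}(1)$ and hence negligible against $n$, so the horizon may be switched for free; in the $\beta$-null recurrent regime it is instead of the same order as $v(\numreg)$ (the very difficulty advertised in the introduction, linked to the Dynkin--Lamperti behaviour of the age), so that a crude moment bound will not suffice and the argument must work hardest precisely here. The delicate point is therefore to show that replacing $\floor{nt}$ by $\floor{t\,v^{\ast}(\numreg)}$ inside $T^{\ast}(\cdot,\numreg)$ leaves the $\cadlaginf$-limit unchanged; I would attempt this by exploiting the regular variation of the conditional bootstrap block-length tails, inherited through the almost sure convergence of the normalized order statistics in \eqref{eq:ch3:almost_sure_representation}, together with the inversion $u(v(z))=z$ that already powers \eqref{eq:ch3:t_n_bound_a_s}, so as to control the number of extra bootstrap blocks needed to bridge the interval $[v^{\ast}(\numreg),n]$. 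This replacement of the horizon is the crux on which the whole equivalence turns.
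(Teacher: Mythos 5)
Your duality identity is sound: indeed $S^{\ast(-1)}_{n,\numreg}(s)=\numreg^{-1}\bigl(T^{\ast}(\floor{s\,v^{\ast}(\numreg)},\numreg)+1\bigr)$, and this disposes of the comparison between $S^{\ast(-1)}_{n,\numreg}$ and the bootstrap counting process run at the horizon $v^{\ast}(\numreg)$ more cleanly than the paper does (the paper reproves this part, its equation \eqref{eq:ch3:equivalence_t_v_t_n}, through event inclusions and an $\eta$-partition argument adapted from Theorem 3.2 of \cite{Tjostheim-2001}); the additive $1/\numreg$ is of course harmless. The genuine gap is your step (ii), the passage from the horizon $v^{\ast}(\numreg)$ to the horizon $n$: it ends with a plan (``I would attempt this by exploiting the regular variation of the conditional bootstrap block-length tails\ldots'') rather than an argument, and the plan points in a direction that cannot close. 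You assert that the gap $n-v^{\ast}(\numreg)$ is of the same order as $v^{\ast}(\numreg)$, invoking Dynkin--Lamperti. That is true in the original probability space, but if it held in the space where the lemma is actually proved, the lemma would be \emph{false}: a gap equal to a positive fraction of $v^{\ast}(\numreg)$ would make $T^{\ast}(\floor{nt},\numreg)-T^{\ast}(\floor{v^{\ast}(\numreg)t},\numreg)$ of order $\numreg$, so the two processes could not be equivalent, no matter how finely one estimates the bootstrap block tails.

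What you are missing is that the lemma is stated under the hypothesis of Lemma~\ref{lemma:ch3:convergence_s_n_star}, i.e.\ in the Skorokhod--Dudley--Wichura representation space where \eqref{eq:ch3:almost_sure_representation} holds, and there the inspection-paradox obstruction evaporates: almost sure convergence of $v(m)^{-1}\sum_{j=1}^{m}\ell(\block_j)$ to the finite limit $\sum_{j}Z_j$ forces the increments to be negligible, $\ell(\block_{m+1})/v(m)\to 0$ a.s., hence (evaluating at $m=\numreg\to\infty$) one gets $0\leq n-v^{\ast}(\numreg)<\ell\left(\block_{\numreg+1}\right)=o\left(v^{\ast}(\numreg)\right)$ and $v^{\ast}(\numreg+1)/v^{\ast}(\numreg)\to 1$ almost surely along the data. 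With this in hand, the paper closes step (ii) not by tail estimates but by an elementary monotone sandwich: since $v^{\ast}(\numreg)\leq n< v^{\ast}(\numreg+1)$ and $T^{\ast}(\cdot,\numreg)$ is non-decreasing,
\[
T^{\ast}_{v^{\ast}(\numreg),\numreg}(t)\;\leq\; T^{\ast}_{n,\numreg}(t)\;\leq\;\frac{\numreg+1}{\numreg}\,T^{\ast}_{v^{\ast}(\numreg+1),\numreg}(t)\quad\forall n,t,
\]
and both extremes converge to $S^{\ast(-1)}$, the upper one precisely because $v^{\ast}(\numreg+1)\sim v^{\ast}(\numreg)$ in this space and $(\numreg+1)/\numreg\to 1$. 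So your first step is a valid (and arguably neater) substitute for the paper's, but the step you yourself identify as the crux is absent, and it is exactly the place where the hypothesis \eqref{eq:ch3:almost_sure_representation} must be used.
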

\begin{proof}
    The proof of this result follows the proof of Theorem 3.2 on \cite{Tjostheim-2001} with slight modifications.

    We need to show that, for any \(\epsilon>0\) given,
    \begin{equation}\label{eq:ch3:condition_bootstrap_convergence}
        \P\left( {\mathop {\sup }\limits_{0 < t \leq K} \left| {T_{n,\numreg}^{\ast}(t) - S_{n,\numreg}^{*\left( { - 1} \right)}(t)} \right| >\epsilon } \right) \to 0\quad \forall K>0.
    \end{equation}

    To prove this, we will show that
    \begin{align}
        \P\left( {\mathop {\sup }\limits_{0 < t \leq K} \left| {T_{{v^{\ast}}(\numreg),\numreg}^{\ast}(t) - S_{n,\numreg}^{*\left( { - 1} \right)}(t)} \right| >\epsilon } \right) \to 0\quad \forall K>0,\label{eq:ch3:equivalence_t_v_t_n} \\
        \P\left( {\mathop {\sup }\limits_{0 < t < K} \left| {T_{v^{\ast}(\numreg),\numreg}^{\ast}(t) - T_{n,\numreg}^{\ast}(t)} \right| > \varepsilon } \right) \to 0\quad \forall K > 0.\label{eq:condition_bootstrap_convergence_t_n_star}
    \end{align}
    from where \eqref{eq:ch3:condition_bootstrap_convergence} will follow by triangular
    inequality.

    Let \(\eta>0\)
    \begin{align}
        \left\{ {S_{n,\numreg}^{*\left( { - 1} \right)}(t) < \eta } \right\} & \subseteq \left\{ {S_{n,\numreg}^{\ast}\left( \eta  \right) > t} \right\}\nonumber                                                                                                                                                              \\
                                                                             & = \left\{ {\frac{1}{{{v^{\ast}}\left( {\numreg} \right)}}\sum\limits_{i = 1}^{\left\lfloor {{\numreg}\eta} \right\rfloor } {\ell\left( {\mathcal{B}_{i,{\numreg}}^{\ast}} \right)}  > t} \right\}\nonumber                                      \\
                                                                             & = \left\{ {\sum\limits_{i = 1}^{\left\lfloor {{\numreg}\eta } \right\rfloor } {\ell\left( {\mathcal{B}_{i,{\numreg}}^{\ast}} \right)}  > t{v^{\ast}}\left( {\numreg} \right)} \right\}\nonumber                                                 \\
                                                                             & = \left\{ {\frac{{{T^{\ast}}\left( {\left\lfloor {{v^{\ast}}\left( {\numreg} \right)}t \right\rfloor },\numreg \right)}}{{\numreg}} < \frac{{\left\lfloor {{\numreg}\eta } \right\rfloor }}{{\numreg}}} \right\}.\label{eq:s_u_star_and_t_star}
    \end{align}

    Because \(T\left({{v^{\ast}}\left( n \right)}\right) = n\), we can write,
    \[T_{{v^{\ast}}\left( {\numreg} \right),\numreg}^{\ast}(t) = \frac{{{T^{\ast}}\left( {\left\lfloor {{v^{\ast}}\left( {\numreg} \right)t} \right\rfloor },\numreg \right)}}{{{u^{\ast}}\left( {{v^{\ast}}\left( {\numreg} \right)} \right)}} = \frac{{{T^{\ast}}\left( {\left\lfloor {{v^{\ast}}\left( {\numreg} \right)t} \right\rfloor },\numreg \right)}}{{\numreg}},\]
    therefore, equation (\ref{eq:s_u_star_and_t_star}) becomes
    \begin{equation}\label{eq:lower-bound}
        \left\{ {S_{n,\numreg}^{*\left( { - 1} \right)}(t) < \eta } \right\} \subseteq \left\{ {T_{v^{\ast}(\numreg),\numreg}^{\ast}(t) < \frac{{\left\lfloor {{\numreg}\eta } \right\rfloor }}{{\numreg}}} \right\}.
    \end{equation}

    Similarly, we obtain that
    \begin{equation}\label{eq:upper-bound}
        \left\{ {S_{n,\numreg}^{*\left( { - 1} \right)}(t) > \eta } \right\} \subseteq
        \left\{ {T_{v^{\ast}(\numreg),\numreg}^{\ast}(t) \geq
                \frac{{\left\lfloor {{\numreg}\eta } \right\rfloor }}{{\numreg}}} \right\}.
    \end{equation}

    Let \(\epsilon_1\in(0,1)\) be fixed and take \(\eta_1<\eta_2\), then, by
    (\ref{eq:upper-bound}) and (\ref{eq:lower-bound}),
    \begin{align*}
        \left\{ {{\eta _1} \leq S_{n,\numreg}^{*\left( { - 1} \right)}(t) < {\eta _2}} \right\} & \subseteq \left\{ {{\eta _1}\left( {1 - {\epsilon _1}} \right) < S_{n,\numreg}^{*\left( { - 1} \right)}(t) < {\eta_2}} \right\}                                                                                                                          \\
                                                                                                & \subseteq \left\{ {\frac{{\left\lfloor {{\numreg}{\eta _1}\left( {1 - {\epsilon _1}} \right)} \right\rfloor }}{{\numreg}} \leq T_{v^{\ast}(\numreg),\numreg}^{\ast}(t) < \frac{{\left\lfloor {{\numreg}{\eta _2}} \right\rfloor }}{{\numreg}}} \right\}.
    \end{align*}

    This means, that, if \(S_{n,\numreg}^{*\left( { - 1} \right)}(t) \in \left[ {{\eta
    _1},{\eta _2}} \right)\), then
    \[\frac{{\left\lfloor {{\numreg}{\eta _1}\left( {1 - {\varepsilon _1}} \right)} \right\rfloor }}{{\numreg}}-\eta_2 < T_{v^{\ast}(\numreg),\numreg}^{\ast}(t) - S_{n,\numreg}^{*\left( { - 1} \right)}(t) < \frac{{\left\lfloor {{\numreg}{\eta _2}} \right\rfloor }}{{\numreg}}-{\eta_1},\]
    which implies that, if $S_{n,\numreg}^{*( - 1)}(t) \in \left[ {{\eta _1},{\eta _2}}
    \right)$, then
    \begin{equation}\label{eq:modules-bound}
        \left| {T_{v^{\ast}(\numreg),\numreg}^{\ast}(t) - S_{n,\numreg}^{*\left( { - 1} \right)}(t)} \right| \leq {\eta _2} - {\eta _1} + {\varepsilon _1}{\eta _1} + \frac{1}{{\numreg}}.
    \end{equation}

    Let \(\varepsilon>0\) be fixed. For any \(s\) we have
    \begin{align*}
        \P\left( {\mathop {\sup }\limits_{t \leq K} \left| {{\xi^{\ast}_{n,\numreg}}(t)} \right| > \varepsilon } \right) & \leq \P\left( {\mathop {\sup }\limits_{t \leq K} \left| {{\xi^{\ast}_{n,\numreg}}(t)} \right| > \varepsilon ,\mathop {\sup }\limits_{t \leq K} S_{n,\numreg}^{*\left( { - 1} \right)}(t) < s} \right) \\
                                                                                                                         & + \P\left( {\mathop {\sup }\limits_{t \leq K} S_{n,\numreg}^{*\left( { - 1} \right)}(t) \geq s} \right),
    \end{align*}
    where \({\xi^{\ast}_{n,\numreg}}(t) = T_{v^{\ast}(\numreg),\numreg}^{\ast}(t) - S_{n,\numreg}^{*\left( { - 1} \right)}(t)\).

    By \eqref{eq:ch3:convergence_s_star_process},
    \[\mathop {\lim }\limits_{s \uparrow \infty } \mathop {\lim }\limits_{n \to \infty } \P\left( {\mathop {\sup }\limits_{t \leq K} S_{n,\numreg}^{*\left( { - 1} \right)}(t) \geq s} \right) = 0.\]

    Therefore, for any \(\delta>0\) we can choose \(s_0\) such that, for \(n\) big
    enough,
    \[\P\left( {\mathop
                    {\sup }\limits_{t \leq K} S_{n,\numreg}^{*\left( { - 1} \right)}(t) \geq s_0}
        \right)<\delta.\]

    By (\ref{eq:modules-bound}), \({\mathop {\sup }\limits_{t \leq K}
    S_{n,\numreg}^{*\left( { - 1} \right)}(t) < s_0}\) implies that
    \[\left| {{\xi^{\ast}_{n,\numreg}}(t)} \right| \leq {\eta _2} - {\eta _1} + {\varepsilon _1}{\eta _1} + \frac{1}{{\numreg}}\quad \forall t \in \left[ {0,K} \right]\;\;,\;\;\forall {\varepsilon _1} \in \left( {0,1} \right).\]

    Choose \(\eta_0,\ldots,\eta_L, N_1, \varepsilon_1\) with
    \(\eta_0=0<\eta_1<\ldots<\eta_{L-1}<\eta_L=s_0\) such that
    \(\eta_i-\eta_{i+1}<{\varepsilon}/{3}\) for all $i$. Let
    \(\varepsilon_1<{\varepsilon}/{s_0}\) and choose \(N_1\) such that
    \({1}/{T\left(N_1\right)}<{\varepsilon}/{3}\).

    Notice that for all \(t\in [0,K]\) there is only one \(i_{n,t}\) such that
    \(S_{n,\numreg}^{*\left( { - 1} \right)}(t)\) belongs to \(\left[ {{\eta
    _{i_{n,t}}},{\eta _{i_{n,t} + 1}}} \right)\), then, by (\ref{eq:modules-bound})
    \[\left| {{\xi^{\ast}_{n,\numreg}}(t)} \right| \leq {\eta_{i_{n,t}}} - {\eta_{i_{n,t} + 1}} + {\varepsilon _1}{\eta _1} + \frac{1}{{\numreg}} \leq \varepsilon \quad \forall t\in[0,K]\;,\;\forall n>N_1,\]
    whenever \(S_{n,\numreg}^{*\left( { - 1} \right)}(t) < s_0\). This implies that
    \[\P\left( {\mathop {\sup }\limits_{t \leq K} \left| {{\xi^{\ast}_{n,\numreg}}(t)} \right| > \varepsilon ,\mathop {\sup }\limits_{t \leq K} S_{n,\numreg}^{*\left( { - 1} \right)}(t) < {s_0}} \right) = 0\quad \forall n \geq {N_1}.\]

    Hence,
    \begin{equation}\label{eq:prob_sup}
        \P\left( {\mathop {\sup }\limits_{t \leq K} \left| {{\xi^{\ast}_{n,\numreg}}(t)} \right| > \varepsilon } \right) < \delta \quad \forall n > {N_1},
    \end{equation}
    which implies (\ref{eq:ch3:equivalence_t_v_t_n}).

    Now we turn to the proof of \eqref{eq:condition_bootstrap_convergence_t_n_star}.

    According to the definition of \(v^{\ast}\), \({v^{\ast}}( {\numreg}) = \sum_{i =
    0}^{\numreg} {\ell\left( {{\mathcal{B}_i}} \right)} \leq n\), therefore,
    \begin{align*}
        T_{v^{\ast}(\numreg),\numreg}^{\ast}(t) & = \frac{{{T^{\ast}}\left( {\left\lfloor {{v^{\ast}}\left( {\numreg} \right)t} \right\rfloor },\numreg \right)}}{{\numreg}}\leq \frac{{{T^{\ast}}\left( {\left\lfloor {nt} \right\rfloor },\numreg \right)}}{{\numreg}} \\
                                                & \leq T_{n,\numreg}^{\ast}(t)\quad \forall n,t.
    \end{align*}

    Notice that ${v^{\ast}}\left( {{\numreg} + 1} \right) = \sum_{i = 0}^{\numreg + 1}
    {\ell( {{\mathcal{B}_i}})} > n,$ therefore,
    \begin{equation*}
        T_{n,\numreg}^{\ast}(t)\leq T^{\ast}_{v^{\ast}\left(\numreg+1\right),\numreg}(t)\frac{\numreg+1}{\numreg}\quad \forall n,\;t.
    \end{equation*}

    Hence,
    \[T_{v^{\ast}(\numreg),\numreg}^{\ast}(t) \leq T_n^{\ast}(t) \leq T_{{v^{\ast}}(\numreg+1)}^{\ast}(t)\frac{\numreg+1}{{\numreg}}\quad \forall n,t.\]

    Equation \eqref{eq:condition_bootstrap_convergence_t_n_star} now follows from the
    convergence of both \(T_{v^{\ast}(\numreg),\numreg}^{\ast}\) and
    \(T_{{v^{\ast}}(\numreg+1),\numreg}^{\ast}\) to $S^{\ast(-1)}$ and the fact that
    \({(\numreg+1)}/{\numreg}\) converges almost surely to 1.
\end{proof}

By \eqref{eq:ch3:expression_q_star}, Lemmas \ref{cadlagFiniteConvergenceInf},
\ref{lemma:ch3:convergence_s_n_star} and \ref{lemma:t_t_n_equivalence_bootstrap} we have
that, in a space where \eqref{eq:ch3:almost_sure_representation} holds, the convergence
in \eqref{eq:ch3:bootstrap_convergence_in_proba} holds almost surely. Therefore, in the
original space we have the weakly-weakly convergence\footnote{The weakly-weakly
convergence, introduced in \cite{CavaliereGeorgiev2020} is the translation of the concept
of weak convergence of random measures to the probabilistic setting, that is, for random
variables \((Z,X)\) and \((Z_n,X_n)\) defined on possibly different probability spaces,
the weakly-weakly convergence of \(Z_n|X_n\) to \(Z|X\) is defined by the fact \(\E
[g(Z_n)|X_n]\convergence{d}\E [g(Z)|X]\) for all bounded and continuous functions \(g\).
For a detailed description of this concept as well as other examples of its application
in the bootstrap setting, please refer to pp. 2550 and Appendix A in
\cite{CavaliereGeorgiev2020}.}
\begin{equation}\label{eq:ch3:weakly_weakly_convergence}
    \forall x\in\R\quad\P^{\ast}\Biggl( U^{\ast}\Bigl(\bootsnumreg,\numreg\Bigr)\leq x \Biggr)\convergence{d}\Phi(x).
\end{equation}

However, given that the right-hand side of \eqref{eq:ch3:weakly_weakly_convergence} is a
constant for each \(x\), the convergence in \eqref{eq:ch3:weakly_weakly_convergence} can
be improved to convergence in probability, which completes the proof.

\subsection{Proof of Theorem \ref{th:ch3:regeneration_bootstrap}}

This proof follows the line of the proof of Theorem 2.1 in \cite{Bickel1981}. As in that
paper, let \(\Gamma_2\) be the set of distribution functions \(G\) satisfying
\(\int{x^2}dG\left(x\right)<\infty\) and define the following notion of convergence in
\(\Gamma_2\)
\begin{equation}\label{eq: ch2: mallows_convergence}
    G_n\Rightarrow G \quad\text{iff}\quad G_n\to G\;\text{weakly}\;\;\text{and}\;\;\int{x^2dG_n\left( x \right)}\to\int{x^2dG\left( x \right)}.
\end{equation}

Denote by \(d_2\) a Mallows metric that metricizes the \(\Rightarrow\) convergence in
\(\Gamma_2\) (see details in Section 8 of \cite{Bickel1981})

If \(Y_1,\ldots,Y_n\) are i.i.d. random variables with common distribution \(G\), denote
by \(G^{(m)}\) the distribution of
\[m^{-\frac{1}{2}}\sum_{j=1}^{m}{\left( Y_j-\E Y_j \right)}.\]

By pp. 1198 in \cite{Bickel1981}, if \(G,H\in\Gamma_2\) then \(G^{(m)}\) and \(H^{(m)}\)
are also in \(\Gamma_2\) and
\begin{equation}\label{eq: ch2: mallows_inequality}
    d_2\left(G^{(m)},H^{(m)}\right)\leq d_2\left(G,H\right).
\end{equation}

Let \(F\) be the distribution of \(\fblock[1]\) and denote by \(F_n\) the empirical
distribution function of \(\fblock[1],\ldots, \fblock[n]\). By (2.1) in \cite{Bickel1981}
and the fact that \(\numreg\to +\infty\) a.s., \(F_{\numreg}\Rightarrow F\) along almost
almost all sample paths, hence, conditionally to the data
\begin{equation}\label{eq: ch2: regeneration_bb_bound_1}
    d_2\left( F_{\numreg},F \right)\to 0.
\end{equation}

Denote by \(N_\sigma\) a standard distribution with mean \(0\) and variance \(\sigma^2\).
By Proposition \ref{cltLemma},
\begin{equation}\label{eq: ch2: regeneration_bb_bound_2}
    d_2\left( F^{(\numreg)},N_\sigma \right)\to 0.
\end{equation}

Conditionally to the data, the distribution of
\[\sqrt{\numreg}\left( \frac{\sum_{j=1}^{\numreg}{\left( f\left( {\mathcal{B}_{j,\numreg}^*} \right) - \frac{1}{{\numreg}}\sum\limits_{i = 1}^{\numreg} {f\left( {\mathcal{B}{_i}} \right)} \right)}}{\numreg} \right)\]
is \(F_{\numreg}^{(\numreg)}\), then, conditionally to the data,
\begin{equation*}
    d_2\left( F_{\numreg}^{(\numreg)},N_\sigma \right)\le d_2\left( F_{\numreg}^{(\numreg)},F^{(\numreg)} \right)+d_2\left( F^{(\numreg)},N_\sigma \right)
\end{equation*}
which goes to 0 by \eqref{eq: ch2: regeneration_bb_bound_1} and
\eqref{eq: ch2: regeneration_bb_bound_2}. The theorem now follows
by \eqref{eq: ch2: mallows_convergence}, \eqref{eq:ch4:varianceEstimator}
and Slutsky's theorem.


\bibliographystyle{imsart-number} 
\bibliography{bibliography}       


\end{document}